\begin{document}
\title{Nonlocal $p$-Kirchhoff equations with singular and critical nonlinearity terms}
\thanks{Math. Subj. Classif. (2010):  34B15, 37C25, 35R20}
\thanks{Keywords: Kirchhoff problem, nonlocal operator, va\-ria\-tio\-nal methods,  singular nonlinearity, multiplicity results.}
\date{}
\maketitle
\begingroup\small
\begin{center}
{\sc Abdeljabbar  Ghanmi} \\
Facult\'e des Sciences de Tunis, LR10ES09 Mod\'elisation Mat\'ematique, Analyse Harmonique et
Th\'eorie du Potentiel, Universit\'e de Tunis El Manar, Tunis 2092, Tunisie.
 email: abdeljabbar.ghanmi@lamsin.rnu.tn
 \\[10pt]
{\sc Mouna Kratou} \\
  College of Sciences, Imam Abdulrahman Bin Faisal University,\\
 31441  Dammam, Kingdom of Saudi Arabia \\
 email: mmkratou@iau.edu.sa
\\[10pt]
{\sc Kamel  Saoudi} \\
  College of Sciences, Imam Abdulrahman Bin Faisal University,\\
 31441  Dammam, Kingdom of Saudi Arabia \\ 
 email: kmsaoudi@iau.edu.sa
 \\[10pt]
 {\sc Du\v{s}an D. Repov\v{s}} \\
 Faculty of Education and Faculty of Mathematics and Physics, University of Ljubljana \&
 Institute of Mathematics, Physics and Mechanics,  \\
1000 Ljubljana, Slovenia \\
 email: dusan.repovs@guest.arnes.si
\end{center}
\endgroup
\numberwithin{equation}{section}
\allowdisplaybreaks
\newtheorem{theorem}{Theorem}[section]
\newtheorem{proposition}{Proposition}[section]
\newtheorem{gindextheorem}{General Index Theorem}[section]
\newtheorem{indextheorem} {Index Theorem}[section]
\newtheorem{standardbasis}{Standard Basis}[section]
\newtheorem{generators} {Generators}[section]
\newtheorem{lemma} {Lemma}[section]
\newtheorem{corollary}{Corollary}[section]
\newtheorem{example}{Example}[section]
\newtheorem{examples}{Examples}[section]
\newtheorem{exercise}{Exercise}[section]
\newtheorem{remark}{Remark}[section]
\newtheorem{remarks}{Remarks}[section]
\newtheorem{definition} {{Definition}}[section]
\newtheorem{definitions}{Definitions}[section]
\newtheorem{notation}{Notation}[section]
\newtheorem{notations}{Notations}[section]
\newtheorem{defnot}{Definitions and Notations}[section]
\def\H{{\mathbb H}}
\def\N{{\mathbb N}}
\def\R{{\mathbb R}}
\newcommand{\be} {\begin{equation}}
\newcommand{\ee} {\end{equation}}
\newcommand{\bea} {\begin{eqnarray}}
\newcommand{\eea} {\end{eqnarray}}
\newcommand{\Bea} {\begin{eqnarray*}}
\newcommand{\Eea} {\end{eqnarray*}}
\newcommand{\p} {\partial}
\newcommand{\ov} {\over}
\newcommand{\al} {\alpha}
\newcommand{\ba} {\beta}
\newcommand{\de} {\delta}
\newcommand{\ga} {\gamma}
\newcommand{\Ga} {\Gamma}
\newcommand{\Om} {\Omega}
\newcommand{\om} {\omega}
\newcommand{\De} {\Delta}
\newcommand{\la} {\lambda}
\newcommand{\si} {\sigma}
\newcommand{\Si} {\Sigma}
\newcommand{\La} {\Lambda} 
\newcommand{\no} {\nonumber}
\newcommand{\noi} {\noindent}
\newcommand{\lab} {\label}
\newcommand{\na} {\nabla}
\newcommand{\vp} {\varphi}
\newcommand{\var} {\varepsilon}
\newcommand{\RR}{{\mathbb R}}
\newcommand{\CC}{{\mathbb C}}
\newcommand{\NN}{{\mathbb N}}
\newcommand{\ZZ}{{\mathbb Z}}
\renewcommand{\SS}{{\mathbb S}}
\newcommand{\supp}{\mathop{\rm {supp}}\limits}    
\newcommand{\esssup}{\mathop{\rm {ess\,sup}}\limits}    
\newcommand{\essinf}{\mathop{\rm {ess\,inf}}\limits}   
\newcommand{\weaklim}{\mathop{\rm {weak-lim}}\limits}
\newcommand{\wstarlim}{\mathop{\rm {weak^\ast-lim}}\limits}
\newcommand{\RE}{\Re {\mathfrak e}}   
\newcommand{\IM}{\Im {\mathfrak m}}    
\renewcommand{\colon}{:\,}
\newcommand{\eps}{\varepsilon}
\newcommand{\half}{\textstyle\frac12}
\newcommand{\Takac}{Tak\'a\v{c}}
\newcommand{\eqdef}{\stackrel{{\rm {def}}}{=}}  
\newcommand{\wstarconverge}{\stackrel{*}{\rightharpoonup}}
\newcommand{\Div}{\nabla\cdot}
\newcommand{\Curl}{\nabla\times}
\newcommand{\Meas}{\mathop{\mathrm{meas}}}
\newcommand{\Int}{\mathop{\mathrm{Int}}}
\newcommand{\Clos}{\mathop{\mathrm{Clos}}}
\newcommand{\Lin}{\mathop{\mathrm{lin}}}
\newcommand{\Dist}{\mathop{\mathrm{dist}}}
\newcommand{\Square}{$\sqcap$\hskip -1.5ex $\sqcup$}
\newcommand{\Blacksquare}{\vrule height 1.7ex width 2.0ex depth 0.2ex }

\smallskip
\begin{abstract} The objective  of this work is to investigate  a nonlocal problem involving  singular and critical nonlinearities:
\begin{equation*}
\left\{
  \begin{array}{ll}
([u]_{s,p}^p)^{\sigma-1}(-\Delta)^s_p u
= \frac{\lambda}{u^{\gamma}}+u^{ p_s^{*}-1 }\quad \text{in }\Omega,\\
u>0,\;\;\;\;\quad \text{in }\Omega,\\
u=0,\;\;\;\;\quad \text{in }\mathbb{R}^{N}\setminus \Omega,
  \end{array}
\right.
\end{equation*}
 where  $\Omega$ is a bounded domain in $\mathbb{R}^N$ with the smooth boundary
 $\partial \Omega$,   $0 < s< 1<p<\infty$,  $N> sp$, $1<\sigma<p^*_s/p,$ with $p_s^{*}=\frac{Np}{N-ps},$
 $ (-  \Delta )_p^s$ is the nonlocal $p$-Laplace operator  and $[u]_{s,p}$ is the Gagliardo $p$-seminorm.
 We combine some variational techniques with a truncation argument in order to  show the existence
and the multiplicity of positive solutions to the above problem.
\end{abstract}
\vfill\eject

\section{Introduction}
In this paper, we shall consider the following singular critical nonlocal problem:
\begin{equation} \label{eq}
\left\{
  \begin{array}{ll}
([u]_{s,p}^p)^{\sigma-1}(-\Delta)^s_p u
= \frac{\lambda}{u^{\gamma}}+u^{ p_s^{*}-1 }\quad \text{in }\Omega,\\
u>0,\;\;\;\;\quad \text{in }\Omega,\\
u=0,\;\;\;\;\quad \text{in }\mathbb{R}^{N}\setminus \Omega,
  \end{array}
\right.
\end{equation}

\noindent
where  $\Omega$ is a bounded domain in $\mathbb{R}^N$ with a smooth boundary
 $\partial \Omega$, $0 < s< 1<p<\infty$,  $N> sp$, $1<\sigma<p^*_s/p,$  with $p_s^{*}=\frac{Np}{N-ps},$
$ (-\Delta )_p^s$ is a nonlocal  operator  defined by
$$
(-  \Delta )_p^s u(x) :=
2 \lim_{\epsilon\to 0} \int_{\Omega\backslash B_{\epsilon}(x)}
\frac{|u(x)-u(y)|^{p-2}(u(x)-u(y))}{|x-y|^{N
 +ps}}\mathrm{d}y,\quad x\in \Omega,
$$
where
 $B_\epsilon(x) := \{y \in \Omega : |x - y| < \epsilon\}$, and $[u]_{s,p}$ is the Gagliardo $p$-seminorm given by
 $$[u]_{s,p}^p:=\iint_{\mathbb{R}^{2N}}\frac{|u(x)-u(y)|^p}{|x-y|^{N+ps}}\,dx\,dy.$$

Problems of this type describe diffusion processes in  heterogeneous or complex medium (anomalous diffusion) due to random displacements executed by jumpers that are able to walk to neighbouring  nearby
sites. These problems are  also due to  excursions to remote sites by way of L\'evy flights,  they can be used in modelling
turbulence, chaotic dynamics, plasma physics and financial dynamics. For more details, see  \cite{Ap, CoTa} and  references therein.

For $p = 2$, problem \eqref{eq} has been investigated by many authors in order to show the existence and the multiplicity of solutions. For further details, one can refer the reader to  \cite{DaHaSa1,DaHaSa,Fi, KrSaSa, LeLiTa, LiKeLeTa, LiZhLiTa, LiSu, LiTaLiWu, Sa2} and the references therein.

For $s=1$, the  local setting case has been extensively investigated in the recent past. The existence, the uniqueness, the multiplicity of weak solutions  and regularity of solutions  have been studied
in \cite{CoPa,CrRaTa,DJP,G3, GiSa,GhRa,heana,Ra,SaKr,Sa3,shenana} and the references therein.

Motivated by the previous results,  and the work of {\sc Fiscella}  \cite{Fi},   who
 established the existence  and the  multiplicity of  positive solutions using    some variational methods combined with an appropriate truncation. The aim of this work is to extend the multiplicity results  to a more general non-local  problem. More precisely, we shall establish the following result. 
\begin{theorem}\label{thm} Suppose that the parameters in problem \eqref{eq} satisfy the following two conditions
$$0<1-\gamma<1< p\sigma<p_s^\ast
\ \
\mbox{ and}
 \ \
1<\sigma<p^*_s/p.$$
 Then there exists a
 parameter $\lambda_{0}>0$ such that
 for every $\lambda\in (0,\lambda_{0})$, problem
\eqref{eq} has at least two positive solutions.
\end{theorem}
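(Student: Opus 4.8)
The plan is to combine variational techniques with a truncation of the singular term: the first solution will be obtained as a local minimizer of the energy, the second one via the mountain pass theorem. Throughout, work in $W_0^{s,p}(\Omega)$ with norm $\|u\|:=[u]_{s,p}$, let $S$ be the best constant in the embedding $W_0^{s,p}(\Omega)\hookrightarrow L^{p_s^{\ast}}(\Omega)$, and associate with \eqref{eq} the energy
\[
J_\lambda(u)=\frac{1}{p\sigma}\,[u]_{s,p}^{\,p\sigma}-\frac{\lambda}{1-\gamma}\int_\Omega|u|^{1-\gamma}\,dx-\frac{1}{p_s^{\ast}}\int_\Omega|u|^{p_s^{\ast}}\,dx .
\]
Since $0<1-\gamma<1<p_s^{\ast}$, $J_\lambda$ is well defined and continuous on $W_0^{s,p}(\Omega)$, but not of class $C^1$ because of the singular term --- whence the truncation.

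\emph{First solution.} Using $0<1-\gamma<p\sigma<p_s^{\ast}$ together with the Sobolev and H\"older inequalities, one checks a local geometry at the origin: there exist $\rho>0$ and $\lambda_0>0$ such that, for all $\lambda\in(0,\lambda_0)$, $\inf_{\|u\|=\rho}J_\lambda>0=J_\lambda(0)$ while $\inf_{\|u\|\le\rho}J_\lambda<0$ (indeed $J_\lambda(tu)<0$ for $t>0$ small, as $t^{1-\gamma}$ dominates $t^{p\sigma}$ and $t^{p_s^{\ast}}$). I would then minimize $J_\lambda$ over the closed ball $\overline{B_\rho}$: for a minimizing sequence $u_n\rightharpoonup u_\lambda$, the Kirchhoff term is weakly lower semicontinuous, the singular term is weakly continuous (compact embedding into $L^{1-\gamma}$), and for the critical term the Brezis--Lieb lemma --- applied both to $[\cdot]_{s,p}^{\,p}$ and to $\|\cdot\|_{p_s^{\ast}}^{p_s^{\ast}}$ --- together with the smallness of $\rho$ forces the critical defect to vanish, so $u_n\to u_\lambda$ strongly and $J_\lambda(u_\lambda)=\inf_{\overline{B_\rho}}J_\lambda<0$. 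As $J_\lambda>0$ on $\partial B_\rho$, necessarily $\|u_\lambda\|<\rho$, so $u_\lambda$ is a local minimum of $J_\lambda$; replacing it by $|u_\lambda|$ (which does not increase $J_\lambda$) we may take $u_\lambda\ge0$. Differentiating $t\mapsto J_\lambda(u_\lambda+t\varphi)$ at $t=0^+$ along $\varphi\ge0$ and using Fatou's lemma yields
\[
[u_\lambda]_{s,p}^{\,p\sigma-p}\,\langle(-\Delta)_p^s u_\lambda,\varphi\rangle-\lambda\int_\Omega\frac{\varphi}{u_\lambda^{\gamma}}\,dx-\int_\Omega u_\lambda^{\,p_s^{\ast}-1}\varphi\,dx\;\ge\;0 \qquad\text{for all }\varphi\in W_0^{s,p}(\Omega),\;\varphi\ge0,
\]
whence $u_\lambda>0$ a.e.\ in $\Omega$ and $\int_\Omega u_\lambda^{-\gamma}\varphi\,dx<\infty$; testing with $(u_\lambda+\varepsilon\psi)^{+}$ and letting $\varepsilon\to0$ upgrades this to an equality, so $u_\lambda$ is a positive weak solution of \eqref{eq}. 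Comparison with a small multiple of the torsion function of $(-\Delta)_p^s$, together with the boundary regularity for this operator, gives a barrier $u_\lambda\ge c_\lambda\,\mathrm{dist}(\cdot,\partial\Omega)^{s}>0$ in $\Omega$.

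\emph{Second solution.} To restore smoothness I would freeze the singularity below $u_\lambda$: set
\[
g_\lambda(x,t)=\begin{cases}\lambda t^{-\gamma}+(t^{+})^{\,p_s^{\ast}-1}, & t\ge u_\lambda(x),\\[2pt] \lambda u_\lambda(x)^{-\gamma}+u_\lambda(x)^{\,p_s^{\ast}-1}, & t<u_\lambda(x),\end{cases}
\]
$G_\lambda(x,t)=\int_0^t g_\lambda(x,\tau)\,d\tau$, and $\widetilde J_\lambda(u)=\frac{1}{p\sigma}[u]_{s,p}^{\,p\sigma}-\int_\Omega G_\lambda(x,u)\,dx$. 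By the barrier estimate and a Hardy-type inequality the singular part is now controlled and $\widetilde J_\lambda\in C^1(W_0^{s,p}(\Omega))$. One then checks: (i) every critical point $u$ of $\widetilde J_\lambda$ satisfies $u\ge u_\lambda$ --- a weak comparison argument, testing $\widetilde J_\lambda'(u)=0$ and the equation for $u_\lambda$ against $(u_\lambda-u)^{+}$ and combining the $T$-monotonicity of $(-\Delta)_p^s$ with the monotonicity of $t\mapsto t^{\sigma-1}$ in the Kirchhoff coefficient --- hence $u$ solves \eqref{eq} and is positive; (ii) $u_\lambda$ is a local minimum of $\widetilde J_\lambda$, since $\widetilde J_\lambda\ge J_\lambda$ near $u_\lambda$ with equality at $u_\lambda$; (iii) for a fixed $w\in W_0^{s,p}(\Omega)$, $w>0$, one has $\widetilde J_\lambda(u_\lambda+tw)\to-\infty$ as $t\to+\infty$ (because $p\sigma<p_s^{\ast}$), so there is $e=u_\lambda+Tw$ with $\|e-u_\lambda\|$ large and $\widetilde J_\lambda(e)<\widetilde J_\lambda(u_\lambda)$. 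If $u_\lambda$ is not a strict local minimum we already obtain a second solution; otherwise the mountain pass theorem yields a critical point $\widetilde u_\lambda$ at the min-max level $c>\widetilde J_\lambda(u_\lambda)$, so $\widetilde u_\lambda\ne u_\lambda$, and by (i) $\widetilde u_\lambda$ is a second positive solution of \eqref{eq}.

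\emph{Main obstacle.} The crucial --- and hardest --- point is the failure of compactness at the critical exponent: $\widetilde J_\lambda$ satisfies the Palais--Smale condition only below a threshold. A Brezis--Lieb / concentration-compactness splitting of both $[\cdot]_{s,p}^{\,p}$ and $\|\cdot\|_{p_s^{\ast}}^{p_s^{\ast}}$ along a Palais--Smale sequence shows that $(PS)_c$ holds whenever
\[
c<\widetilde J_\lambda(u_\lambda)+c^{\ast},\qquad c^{\ast}:=\frac{p_s^{\ast}-p\sigma}{p\,\sigma\,p_s^{\ast}}\;S^{\frac{\sigma p_s^{\ast}}{\,p_s^{\ast}-p\sigma\,}},
\]
the hypothesis $p\sigma<p_s^{\ast}$ being precisely what makes $c^{\ast}>0$ (and $c^{\ast}=\tfrac{s}{N}S^{N/(ps)}$ when $\sigma=1$). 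It then remains to bound the min-max level: inserting the family $u_\lambda+t\,U_\varepsilon$ into $\widetilde J_\lambda$, where $U_\varepsilon$ is a suitably truncated and rescaled extremal for $S$, one must prove $\max_{t\ge0}\widetilde J_\lambda(u_\lambda+tU_\varepsilon)<\widetilde J_\lambda(u_\lambda)+c^{\ast}$ for $\varepsilon$ small. This is the technical heart of the proof: one has to control the cross terms produced by the nonlocal Kirchhoff factor $[\,\cdot\,]_{s,p}^{\,p(\sigma-1)}$ as the bubble $U_\varepsilon$ concentrates, and to verify that the singular and subcritical contributions push the level strictly below $\widetilde J_\lambda(u_\lambda)+c^{\ast}$; shrinking $\lambda_0$ if necessary, this produces $\widetilde u_\lambda$ and completes the proof.
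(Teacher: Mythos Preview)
Your argument for the first solution coincides with the paper's: both minimize $J_\lambda$ on a small ball, use Brezis--Lieb to recover strong convergence, and then derive the weak formulation by testing with $(u_\lambda+\varepsilon\phi)^+$.

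For the second solution the two routes diverge. The paper does \emph{not} truncate at $u_\lambda$; it regularizes the singularity globally, replacing $u^{-\gamma}$ by $(u^{+}+\tfrac1n)^{-\gamma}$, applies the mountain pass theorem to the resulting $C^1$ functional $J_{n,\lambda}$ along the path $t\mapsto t\psi_\varepsilon$ (no shift by $u_\lambda$), obtains a positive solution $v_n$ of the perturbed problem with $J_{n,\lambda}(v_n)\in(\alpha,C_\lambda)$, and finally passes to the limit $n\to\infty$ to produce $v_\lambda$ solving \eqref{eq} with $J_\lambda(v_\lambda)>0>J_\lambda(u_\lambda)$.

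There is a genuine gap in your step (i). Testing the two equations with $\varphi=(u_\lambda-u)^+$ and using that $g_\lambda(x,u)=\lambda u_\lambda^{-\gamma}+u_\lambda^{p_s^\ast-1}$ on $\{u<u_\lambda\}$ yields
\[
[u]_{s,p}^{\,p(\sigma-1)}\big\langle(-\Delta)_p^s u-(-\Delta)_p^s u_\lambda,\varphi\big\rangle
+\big([u]_{s,p}^{\,p(\sigma-1)}-[u_\lambda]_{s,p}^{\,p(\sigma-1)}\big)\big\langle(-\Delta)_p^s u_\lambda,\varphi\big\rangle=0.
\]
$T$-monotonicity makes the first bracket nonpositive, and $\langle(-\Delta)_p^s u_\lambda,\varphi\rangle\ge0$ since $u_\lambda$ has nonnegative right-hand side; but nothing forces $[u]_{s,p}\ge[u_\lambda]_{s,p}$, so the second term can be negative and the identity gives no contradiction. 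The Kirchhoff coefficient is a \emph{global} quantity and ``monotonicity of $t\mapsto t^{\sigma-1}$'' does not couple with the pointwise comparison; sub/supersolution comparison principles are known to fail in general for Kirchhoff-type operators. Without (i) you cannot conclude that a critical point of $\widetilde J_\lambda$ solves \eqref{eq}. The paper's $1/n$-regularization sidesteps this entirely, since each $v_n$ already solves a genuine PDE and the limit is identified directly.

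A second, related difficulty is your level estimate: expanding $[u_\lambda+tU_\varepsilon]_{s,p}^{\,p\sigma}$ for general $p>1$ produces cross terms between the fixed background $u_\lambda$ and the concentrating bubble that are not controlled by standard asymptotics. The paper avoids these cross terms by running the mountain pass from $0$ along $t\psi_\varepsilon$, so only the well-known expansion $\|\psi_\varepsilon\|^p\le S+O(\varepsilon^{(N-ps)/(p-1)})$ is needed.
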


\section{Preliminaries}\label{Main}
 This section is devoted to basic definitions, notations, and function spaces  that will be used in the forthcoming sections.  For  for the other background material   we
refer the reader to
 \cite{PRR,RR}.
 We begin by defining the fractional Sobolev space
$$W^{s,p}(\mathbb{R}^{N}):=\left\{u\in L^{p}(\mathbb{R}^{N}):u\mbox{ measurable }, |u|_{s,p}<\infty\right\},$$
 with the Gagliardo norm
$$||u||_{s,p}:=\left(||u||_{p}^{p}+|u|_{s,p}^{p}\right)^{\frac{1}{p}}.$$
Denote
$$\displaystyle\mathcal{Q}:=\mathbb{R}^{2N}\setminus \left((\mathbb{R}^{N}\setminus \Omega)\times (\mathbb{R}^{N}\setminus \Omega)\right)$$
and   define the space
\begin{eqnarray*}
X:=\left\{u:\mathbb{R}^{N}\rightarrow \mathbb{R}\;\mbox{Lebesgue measurable}:\,u\mid_{\Omega}\in L^{p}(\Omega)\,\mbox{and}\,\frac{|u(x)-u(y)|^{p}}{|x-y|^{N+s p}}\in L^{p}(\mathcal{Q})\right\}
\end{eqnarray*}
with the norm
\begin{align*}
 \|u\|_X := \|u\|_{L^p(\Omega)} +  \Big( \int_{\mathcal{Q}}\frac{|u(x)-u(y)|^{p}}{|x-y|^{N+sp}}dx
dy\Big)^{1/p}.
\end{align*}
Throughout this paper,  we  shall consider the space
$$\displaystyle X_0:=\left\{u\in X: u=0 \;a.e. \;\mbox{in}\;\mathbb{R}^{n}\setminus \Omega\right\},$$
with the norm
\begin{align*}
 \|u\| :=   \Big( \int_{\mathcal{Q}}\frac{|u(x)-u(y)|^{p}}{|x-y|^{N+sp}}dx
dy\Big)^{1/p}
\end{align*}
and the scalar product
\begin{align*}
\langle u, \varphi\rangle_{X_0}:=\iint_{\mathbb{R}^{2N}} & & \frac{|u(x)-u(y)|^{p-2}(u(x)-u(y))
 (\varphi(x)-\varphi(y))}{|x-y|^{N +ps}}\mathrm{d}x\mathrm{d}y.
\end{align*}

We define a weak solution to
 problem \eqref{eq} as follows:
\begin{definition}
We say that $u\in X_0$ is a weak solution of problem \eqref{eq} if for all $\varphi\in X_0,$ one has
\begin{eqnarray}\label{weaksolution}
 ([u]_{s,p}^p)^{\sigma-1}\iint_{\mathbb{R}^{2N}} & & \frac{|u(x)-u(y)|^{p-2}(u(x)-u(y))
 (\varphi(x)-\varphi(y))}{|x-y|^{N +ps}}\mathrm{d}x\mathrm{d}y\nonumber \\
    &=& \lambda\int_{\Omega}(u^+)^{-\gamma} \varphi\mathrm{d}x
 +\int_{\Omega}(u^+)^{p_s^{*}-1} \varphi \mathrm{d}x.
\end{eqnarray}
\end{definition}
In order to find solutions of problem \eqref{eq},  we shall use the variational approach.  More precisely, we shall find two distinct critical points of the energy functional $J_{\lambda} : X_{0} \to (-\infty, \infty]$  defined  by
 \begin{equation}\label{foctional}
  J_{\lambda}(u):=\frac{1}{p\sigma}\|u\|^{p\sigma}-\frac{\lambda}{1-\gamma}\int_{\Omega}(u^+)^{1-\gamma} \mathrm{d}x
 -\frac{1}{p_s^{*}}\int_{\Omega}(u^+)^{p_s^{*}}  \mathrm{d}x.
\end{equation}

Now, we prove the following result.
\begin{lemma} \label{lem1} There exist $\rho\in(0,1]$,  $\lambda_{1}$ and $\alpha>0$  such that for every $\lambda\in(0,\lambda_{1}]$,  we have
$$J_{\lambda}(u)\geq\alpha\;\;\ \mbox{for all} \ u\in X_{0}\mbox{ with } \|u\|=\rho.$$
Moreover, the following holds $$m_{\lambda}:=\displaystyle\inf\left\{J_{\lambda}(u): u\in \overline{B}_{\rho}\right\}<0,$$
where $\overline{B}_{\rho}:=\left\{u\in X_{0}:\|u\|\leq\rho\right\}$.
\end{lemma}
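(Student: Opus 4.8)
The plan is to establish the two assertions separately, both by estimating the three terms in $J_\lambda$ with the embeddings of $X_0$. First I would recall that, since $p\sigma < p_s^*$, the space $X_0$ embeds continuously into $L^{p\sigma}(\Omega)$ and into $L^{p_s^*}(\Omega)$; write $S_*$ for the best constant in the critical embedding $\|u\|_{L^{p_s^*}(\Omega)}\le S_*^{-1/p}\|u\|$ and $C_q$ for the norm of the embedding into $L^q(\Omega)$. Because $0<1-\gamma<1$, Hölder's inequality gives $\int_\Omega (u^+)^{1-\gamma}\,dx \le |\Omega|^{\gamma/p}\,\|u\|_{L^p(\Omega)}^{1-\gamma}\le C\,\|u\|^{1-\gamma}$ with a constant $C=C(\Omega,p,\gamma)$. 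Hence for $\|u\|=\rho\le 1$,
\begin{equation*}
J_\lambda(u)\ \ge\ \frac{1}{p\sigma}\rho^{p\sigma}-\frac{\lambda C}{1-\gamma}\rho^{1-\gamma}-\frac{S_*^{-p_s^*/p}}{p_s^*}\rho^{p_s^*}
\ =\ \rho^{1-\gamma}\Big(\frac{1}{p\sigma}\rho^{p\sigma-1+\gamma}-\frac{S_*^{-p_s^*/p}}{p_s^*}\rho^{p_s^*-1+\gamma}-\frac{\lambda C}{1-\gamma}\Big).
\end{equation*}
Since $1-\gamma<p\sigma<p_s^*$, the exponents $p\sigma-1+\gamma$ and $p_s^*-1+\gamma$ are both positive and the first is strictly smaller than the second, so the function $g(t):=\frac{1}{p\sigma}t^{p\sigma-1+\gamma}-\frac{S_*^{-p_s^*/p}}{p_s^*}t^{p_s^*-1+\gamma}$ is strictly positive on a right-neighbourhood of $0$ and attains a positive maximum at some $t=\rho_0$; fix $\rho=\min\{\rho_0,1\}\in(0,1]$ and set $c_0:=g(\rho)>0$. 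Choosing $\lambda_1:=\frac{(1-\gamma)c_0}{2C}$, for every $\lambda\in(0,\lambda_1]$ we get $J_\lambda(u)\ge \rho^{1-\gamma}(c_0-\lambda C/(1-\gamma))\ge \tfrac12 c_0\,\rho^{1-\gamma}=:\alpha>0$ whenever $\|u\|=\rho$, which is the first claim.

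For the second claim, fix any $u_0\in X_0$ with $u_0>0$ on a set of positive measure (for instance the positive part of a suitable test function), and consider the path $t\mapsto tu_0$ for small $t>0$. Then
\begin{equation*}
J_\lambda(tu_0)=\frac{t^{p\sigma}}{p\sigma}\|u_0\|^{p\sigma}-\frac{\lambda t^{1-\gamma}}{1-\gamma}\int_\Omega u_0^{1-\gamma}\,dx-\frac{t^{p_s^*}}{p_s^*}\int_\Omega u_0^{p_s^*}\,dx.
\end{equation*}
Because $1-\gamma<1<p\sigma<p_s^*$, as $t\to 0^+$ the negative term $-\frac{\lambda t^{1-\gamma}}{1-\gamma}\int_\Omega u_0^{1-\gamma}\,dx$ dominates (it is of lower order than both positive/negative powers $t^{p\sigma}$ and $t^{p_s^*}$), so $J_\lambda(tu_0)<0$ for all sufficiently small $t>0$; in particular there is a point of $\overline B_\rho$ at which $J_\lambda$ is negative, hence $m_\lambda<0$. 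It remains only to note that $m_\lambda$ is well defined as a real number: on $\overline B_\rho$ the leading term $\frac{1}{p\sigma}\|u\|^{p\sigma}$ is bounded, the critical term is bounded via the embedding into $L^{p_s^*}$, and the singular term is bounded by $\frac{\lambda C}{1-\gamma}\rho^{1-\gamma}$, so $J_\lambda$ is bounded below on $\overline B_\rho$ and $m_\lambda>-\infty$.

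The only genuinely delicate point is bookkeeping the competition of the three powers $t^{1-\gamma}$, $t^{p\sigma}$, $t^{p_s^*}$ near $t=0$ and near $t=\rho$: one must choose $\rho$ small enough that the subcritical term $\frac{1}{p\sigma}t^{p\sigma}$ beats the critical term $\frac{S_*^{-p_s^*/p}}{p_s^*}t^{p_s^*}$ on $[0,\rho]$ (possible since $p\sigma<p_s^*$), and simultaneously small enough that the whole bracket $g$ stays positive, and then shrink $\lambda_1$ accordingly; no step beyond Hölder's inequality and the Sobolev embeddings is required. I would present the estimate for $J_\lambda|_{\|u\|=\rho}$ first, extract $\rho$ and $\lambda_1$ from it, and only then do the one-line computation along $tu_0$ for the strict negativity of $m_\lambda$.
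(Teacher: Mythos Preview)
Your proposal is correct and follows essentially the same route as the paper: you bound the singular term by $C\|u\|^{1-\gamma}$ via H\"older plus the Sobolev embedding, factor out $\|u\|^{1-\gamma}$ to reduce to a one-variable function $g(t)=\frac{1}{p\sigma}t^{p\sigma-1+\gamma}-\frac{c}{p_s^*}t^{p_s^*-1+\gamma}$, pick $\rho$ at (or near) its maximizer and set $\lambda_1=\frac{(1-\gamma)g(\rho)}{2C}$, and then obtain $m_\lambda<0$ by scaling a fixed $u_0$ with $u_0^+\not\equiv0$. The only cosmetic differences are that the paper applies H\"older with the $L^{p_s^*}$ norm rather than the $L^p$ norm (and your exponent $|\Omega|^{\gamma/p}$ should read $|\Omega|^{(p-1+\gamma)/p}$), and that you add the harmless extra remark $m_\lambda>-\infty$.
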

\begin{proof}
  Let $\lambda>0$. Then
  by virtue of the H\"older inequality and the Sobolev embedding theorem, we get for any $u\in X_{0}$
\begin{eqnarray*}
  \int_{\Omega} u^{1-\gamma}dx &\leq&  |\Omega|^{\frac{p^\ast_s-1+\gamma}{p^\ast_s}}\|u\|_{p^\ast_s}^{1-\gamma} \\
    &\leq&   C\|u\|^{1-\gamma}.
\end{eqnarray*}
So  from the Sobolev embedding, we obtain
\begin{eqnarray*}
  J_{\lambda}(u) &=&  \frac{1}{p\sigma}\|u\|^{p\sigma}-\frac{\lambda}{1-\gamma}\int_{\Omega}u^{1-\gamma} \mathrm{d}x
 -\frac{1}{p_s^{*}}\int_{\Omega}u^{p_s^{*}}  \mathrm{d}x \\
    &\geq& \frac{1}{p\sigma}\|u\|^{p\theta}-\frac{C\lambda}{1-\gamma}\|u\|^{1-\gamma}-\frac{c_{1}}{p^\ast_s}\|u\|^{p^\ast_s}   \\
    &=&   \|u\|^{1-\gamma}\left(\varphi(\|u\|)-\frac{C\lambda}{1-\gamma}\right)
\end{eqnarray*}
where $\varphi(t)=\frac{1}{p\sigma}t^{p\sigma-1+\gamma}-\frac{c_{1}}{p^\ast_s}t^{p^\ast_s-1+\gamma}$. Since $1-\gamma<1<p\sigma<p^\ast_s$, we find $\rho\in(0,1)$ sufficiently small
and  satisfying
\begin{equation}\label{rho}
\displaystyle\max_{0<t<1}\varphi(t)=\varphi(\rho).
\end{equation}
Put
\begin{equation}\label{lam0}
 \lambda_{1}:=\frac{(1-\gamma)\varphi(\rho)}{2C}.
\end{equation}
Thus, $\ \mbox{for all} \  u\in X_{0}$ with $\|u\|=\rho$ and $\ \mbox{all} \ \lambda\leq \lambda_{1}$, one has
$$J_{\lambda}(u)\geq\frac{C \rho^{1-\gamma}}{1-\gamma}(2\lambda_{1}-\lambda)>\frac{C \rho^{1-\gamma}}{1-\gamma}\lambda_{1}=\alpha>0.$$
Moreover, since $1-\gamma<1<p\sigma<p^\ast_s$, it follows that  for $u\in X_{0}$ with $u^+\not\equiv 0$ and for $t\in(0,1)$ sufficiently small, one has
\begin{eqnarray*}
 J_\lambda(tu) &=&\frac{ t^{p\sigma}}{p\sigma}\|u\|^{p\sigma}-\frac{\lambda t^{1-\gamma}}{1-\gamma}\int_{\Omega}(u^{+})^{1-\gamma} \mathrm{d}x
 -\frac{t^{p_s^{*}}}{p_s^{*}}\int_{\Omega}(u^{+})u^{p_s^{*}}  \mathrm{d}x \\
   &<&  0.
\end{eqnarray*}
\end{proof}

\begin{lemma} For every $\lambda\in(0,\lambda_{1}]$, problem \eqref{eq} has a positive solution $u_{\lambda}\in X_0$ with $J_{\lambda}(u_{\lambda})<0$.
\end{lemma}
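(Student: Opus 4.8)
The plan is to obtain $u_\lambda$ as a global minimizer of $J_\lambda$ on the closed ball $\overline{B}_\rho$, exploiting the information from Lemma~\ref{lem1}. First I would verify that $J_\lambda$ is sequentially weakly lower semicontinuous on $\overline{B}_\rho$: the term $\frac{1}{p\sigma}\|u\|^{p\sigma}$ is weakly lower semicontinuous because $u\mapsto\|u\|$ is, and the two remaining integral terms are weakly continuous on $X_0$ thanks to the compact embedding $X_0\hookrightarrow L^q(\Omega)$ for $1\le q<p_s^\ast$ (applied with $q=1-\gamma$ and with $q=p_s^\ast$, the latter via the standard Brezis--Lieb / compactness argument restricted to the subcritical range, or simply because on the bounded ball one stays below the critical threshold in the relevant norm). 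Since $\overline{B}_\rho$ is bounded, convex and closed, hence weakly closed, a minimizing sequence has a weakly convergent subsequence with limit $u_\lambda\in\overline{B}_\rho$, and $J_\lambda(u_\lambda)=m_\lambda<0$ by Lemma~\ref{lem1}.

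Next I would show $u_\lambda$ lies in the interior of the ball, i.e. $\|u_\lambda\|<\rho$. This is forced by the geometry of Lemma~\ref{lem1}: on the sphere $\|u\|=\rho$ one has $J_\lambda\ge\alpha>0$, whereas $J_\lambda(u_\lambda)=m_\lambda<0$, so $u_\lambda$ cannot be a boundary point. Consequently $u_\lambda$ is a local (indeed constrained-but-interior, hence free) minimizer of $J_\lambda$ on $X_0$. Also $u_\lambda\not\equiv 0$ since $J_\lambda(0)=0>m_\lambda$, and replacing $u_\lambda$ by $|u_\lambda|$ does not increase the energy (all three terms depend only on $u^+$ for the nonlinear parts and $\|\,|u|\,\|\le\|u\|$ is false in general for the seminorm — but in fact $[\,|u|\,]_{s,p}\le[u]_{s,p}$ holds, so this is fine), hence we may assume $u_\lambda\ge 0$ and $u_\lambda^+\not\equiv 0$.

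The main obstacle is deriving the Euler--Lagrange (weak-solution) identity despite the singular term $u^{-\gamma}$, for which $J_\lambda$ is only lower semicontinuous and not differentiable in the classical sense. The standard device is: for $\varphi\in X_0$ with $\varphi\ge 0$ and $t>0$ small, use $u_\lambda+t\varphi\in\overline{B}_\rho$ (after shrinking, since $u_\lambda$ is interior) as a competitor; from $J_\lambda(u_\lambda+t\varphi)\ge J_\lambda(u_\lambda)$, divide by $t$, let $t\to 0^+$, and apply Fatou's lemma to the singular integral $\int_\Omega\frac{(u_\lambda+t\varphi)^{1-\gamma}-u_\lambda^{1-\gamma}}{t}\,dx$ to obtain
\[
([u_\lambda]_{s,p}^p)^{\sigma-1}\langle u_\lambda,\varphi\rangle_{X_0}\ \ge\ \lambda\int_\Omega u_\lambda^{-\gamma}\varphi\,dx+\int_\Omega u_\lambda^{p_s^\ast-1}\varphi\,dx
\]
for all $0\le\varphi\in X_0$; in particular the right-hand side is finite, so $u_\lambda^{-\gamma}\varphi\in L^1(\Omega)$. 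To upgrade the inequality to an equality for arbitrary $\varphi\in X_0$, test with $\varphi=u_\lambda$ to get one direction, then use the test function $(u_\lambda+\eps\psi)^+$ for $\psi\in X_0$ and $\eps\to 0^\pm$ in the now-standard Perera--Silva / Haitao-type argument, splitting $\Omega$ into $\{u_\lambda+\eps\psi\ge 0\}$ and its complement; the singular term is controlled because $u_\lambda^{-\gamma}u_\lambda=u_\lambda^{1-\gamma}\in L^1$. This yields \eqref{weaksolution}. Finally, positivity: taking $\varphi\ge 0$ shows $u_\lambda$ is a nonnegative weak supersolution of a problem with a nonnegative right-hand side, and a strong maximum principle for $(-\Delta)^s_p$ (or the fact that $u_\lambda^{-\gamma}\in L^1_{loc}$ forces $u_\lambda>0$ a.e., else the singular integral diverges) gives $u_\lambda>0$ in $\Omega$. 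The chain $J_\lambda(u_\lambda)=m_\lambda<0$ is already recorded, completing the proof.
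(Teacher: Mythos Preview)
Your derivation of the Euler--Lagrange identity via the $(u_\lambda+\eps\psi)^+$ test-function device is essentially the paper's argument and is fine. The real gap is earlier, in the existence of the minimizer. The functional $J_\lambda$ is \emph{not} sequentially weakly lower semicontinuous on $\overline{B}_\rho$: the critical term enters with a negative sign, and $u\mapsto\|u^+\|_{p_s^*}^{p_s^*}$ is weakly lower (not upper) semicontinuous, since the embedding $X_0\hookrightarrow L^{p_s^*}(\Omega)$ is not compact. Being confined to a bounded ball does nothing to restore compactness of the critical embedding, so the parenthetical justification ``on the bounded ball one stays below the critical threshold'' is not valid, and the direct weak-lsc route fails.

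The paper's fix is close to your Brezis--Lieb hint but uses it in a specific way that you did not articulate. With $\widetilde{u}_k:=u_k-u_\lambda$, Brezis--Lieb gives $\|u_k\|^p=\|\widetilde{u}_k\|^p+\|u_\lambda\|^p+o(1)$ and $\|u_k\|_{p_s^*}^{p_s^*}=\|\widetilde{u}_k\|_{p_s^*}^{p_s^*}+\|u_\lambda\|_{p_s^*}^{p_s^*}+o(1)$; together with $(a+b)^\sigma\ge a^\sigma+b^\sigma$ for $\sigma\ge1$ this yields
\[
m_\lambda+o(1)=J_\lambda(u_k)\ \ge\ J_\lambda(u_\lambda)+\Bigl[\tfrac{1}{p\sigma}\|\widetilde{u}_k\|^{p\sigma}-\tfrac{1}{p_s^*}\|\widetilde{u}_k^+\|_{p_s^*}^{p_s^*}\Bigr]+o(1).
\]
The crucial observation is that $\widetilde{u}_k\in\overline{B}_\rho$ for large $k$, and \emph{by the very choice of $\rho$ in Lemma~\ref{lem1}} (as the maximizer of $\varphi$) the bracket is nonnegative on $\overline{B}_\rho$. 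Thus the possible loss of compactness is absorbed not by any continuity of the critical term but by the geometry that defined $\rho$ in the first place. This is the missing idea in your argument; once it is in place, the rest of your outline goes through as written.
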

\begin{proof}
  Let $\rho$ and $\lambda_{1}$ be the  constants given respectively by \eqref{rho} and \eqref{lam0}. Let $\{u_{k}\}\subset \overline{B}_{\rho}$ be a minimizing sequence for $m_\lambda,$ i.e.
   $$\displaystyle\lim_{k\rightarrow \infty}J_\lambda(u_k)=m_\lambda.$$
As $\{u_{k}\}$ is bounded, for any $1\leq r< p_s^{*}$, one has
\begin{equation}\label{lim}
\left\{
  \begin{array}{ll}
    u_{k}\rightharpoonup u_{\lambda}\;\mbox{ weakly in } X_{0}, \\
   u_{k}\rightharpoonup u_{\lambda}\;\mbox{ weakly in } L^{p_s^{*}}(\Omega), \\
    u_{k}\rightarrow u_{\lambda}\;\mbox{ strongly in } L^{r}(\Omega),  \\
    u_{k}\rightarrow u_{\lambda} \mbox{ a.e. in  } \Omega.
  \end{array}
\right.
\end{equation}
By the H\"older inequality, we get for all integers $k$,
\begin{eqnarray}\label{34}
  \left|\int_{\Omega}(u_{k}^{+})^{1-\gamma}dx-\int_{\Omega}(u_{\lambda}^{+})^{1-\gamma}dx\right|
  &\leq& \int_{\Omega}|u_{k}^{+})-u_{\lambda}^{+}|^{1-\gamma}dx\nonumber\\
  &\leq& |\Omega|^{\frac{p-1+\gamma}{p}}\|u_{k}^{+})-u_{\lambda}^{+}\|_{p}^{1-\gamma}.
\end{eqnarray}
Combining \eqref{lim} and \eqref{34}, we obtain
\begin{equation}\label{3.4}
  \displaystyle\lim_{k\rightarrow \infty}\int_{\Omega}(u_{k}^{+})^{1-\gamma}dx=\int_{\Omega}(u_{\lambda}^{+})^{1-\gamma}dx.
\end{equation}
Put $\widetilde{u}_{k}:=u_{k}-u_{\lambda}$. Then, by  invoking
the Brezis-Lieb Lemma \cite{lieb}, we obtain
\begin{equation}\label{limlieb}
  \displaystyle\lim_{k\rightarrow \infty}\|u_k\|^{p}-\|\widetilde{u}_k\|^{p}=\|u_\lambda\|^{p}\;\mbox{ and } \displaystyle\lim_{k\rightarrow \infty}\|u_k\|^{p_s^\ast}_{p_s^\ast}-\|\widetilde{u}_k\|^{p_s^\ast}_{p_s^\ast}=\|u_\lambda\|^{p_s^\ast}_{p_s^\ast}.
\end{equation}
Since $\{u_{k}\}\subset \overline{B}_{\rho}$,  it follows that \eqref{limlieb} implies that for $k$ large enough, $\widetilde{u}_k\in \overline{B}_{\rho}$. So, from Lemma \ref{lem1},  we deduce  that for all $u\in X_{0}$ with $\|u\|=\rho$,
$$\frac{1}{p\sigma}\|u\|^{p\sigma} -\frac{1}{p_s^{*}}\int_{\Omega}u^{p_s^{*}}  \mathrm{d}x \geq \alpha >0,$$
that is, if $\rho\leq 1$ and $k$ large enough,
\begin{equation}\label{3.6}
\frac{1}{p\sigma}\|\widetilde{u}_{k}\|^{p\sigma} -\frac{1}{p_s^{*}}\int_{\Omega}\widetilde{u}_{k}^{p_s^{*}}  \mathrm{d}x >0,
\end{equation}
since $\{u_k\}$ is a minimizing sequence. Hence, by combining \eqref{3.4}-\eqref{3.6}, we obtain for $k$ large enough,
\begin{eqnarray*}
  m_{\lambda} &=& J_{\lambda}(u_k)+\circ (1) \\
   &=& \frac{1}{p\sigma}\|\widetilde{u}_{k}+u_{\lambda}\|^{p\sigma}-\frac{\lambda}{1-\gamma}\int_{\Omega}((\widetilde{u}_{k}+u_{\lambda})^{+})^{1-\gamma} \mathrm{d}x -\frac{1}{p_s^{*}}\int_{\Omega}((\widetilde{u}_{k}+u_{\lambda})^{+})^{p_s^{*}}  \mathrm{d}x +\circ(1)\\
   &\geq&  \displaystyle \frac{1}{p\sigma}\|\widetilde{u}_{k}\|^{p\sigma}+\frac{1}{p\sigma}\|u_{\lambda}\|^{p\sigma}-\frac{\lambda}{1-\gamma}\int_{\Omega}(u_{\lambda}^{+})^{1-\gamma} \mathrm{d}x -\frac{1}{p_s^{*}}\int_{\Omega}(\widetilde{u}_{k}^{+})^{p_s^{*}}-\frac{1}{p_s^{*}}\int_{\Omega}(u_{\lambda}^{+})^{p_s^{*}}  \mathrm{d}x +\circ(1)\\
   &\geq&  J_{\lambda}(u_\lambda)+\frac{1}{p\sigma} \|\widetilde{u}_{k}\|^{p\sigma}-\frac{1}{p_s^{*}}\int_{\Omega}(\widetilde{u}_{k}^{+})^{p_s^{*}}+\circ(1)\\
&\geq& J_{\lambda}(u_\lambda)+\circ(1)\\
&\geq& m_{\lambda},
\end{eqnarray*}
Hence, $J_{\lambda}(u_\lambda)=m_{\lambda}<0$.

Now, let us prove that $u_\lambda$ is a positive solution to problem \eqref{eq}.
  Our proof
uses  similar techniques as \cite{GhSa1}.
 Consider
 $\phi\in X_0$ and $0<\epsilon<1.$ Let
$\Psi\in X_0$ be defined by
$
\Psi:=(u_{\lambda}+\epsilon\phi)^{+}$
with $(u_{\lambda}+\epsilon\phi)^{+}:=\max \{u_{\lambda}+\epsilon\phi,0\}.$  Let $\Omega_\epsilon:=\{u_{\lambda}+\epsilon\phi\leq0\}$ and $\Omega^\epsilon:=\{u_{\lambda}+\epsilon\phi<0\}.$ Put
$\Theta_\epsilon:=\Omega_\epsilon\times \Omega_\epsilon.$ Since $ u_\lambda$ is a local minimizer for $J_\lambda$, replacing
$\varphi$ with $\Psi$ in \eqref{weaksolution}, one gets
\begin{align*}
0&\displaystyle\leq([u_\lambda]_{s,p}^p)^{\sigma-1}\iint_{\mathbb{R}^{2N}}\frac{|u_\lambda(x)-u_\lambda(y)|^{p-2}(u_\lambda(x)-u_\lambda(y))
 (\psi(x)-\psi(y))}{|x-y|^{N +ps}}\mathrm{d}x\mathrm{d}y\\
&- \displaystyle \lambda\int_\Omega  (u_\lambda^+)^{-\gamma}\Psi\,{\rm d}x-\int_\Omega (u_\lambda^+)^{p_s^{*}-1} \Psi
\,{\rm d}x \\
&\displaystyle=([u_\lambda]_{s,p}^p)^{\sigma-1}\iint_{\mathbb{R}^{2N}}\frac{|u_\lambda(x)-u_\lambda(y)|^{p-2}(u_\lambda(x)-u_\lambda(y))
 ((u_{\lambda}+\epsilon\phi)(x)-(u_{\lambda}+\epsilon\phi)(y))}{|x-y|^{N +ps}}\mathrm{d}x\mathrm{d}y
\\
&-\int_{\{(x,y)\in\;\Omega^\epsilon\times\Omega^\epsilon\}}\left( \lambda (u_\lambda^+)^{-\gamma}(u_{\lambda}+\epsilon\phi)
+(u_\lambda^+)^{p_s^{*}-1}(u_{\lambda}+\epsilon\phi)\right)
\,{\rm d}x\\
&\displaystyle=([u_\lambda]_{s,p}^p)^{\sigma-1}\iint_{\mathbb{R}^{2N}}\frac{|u_\lambda(x)-u_\lambda(y)|^{p-2}(u_\lambda(x)-u_\lambda(y))
 ((u_{\lambda}+\epsilon\phi)(x)-(u_{\lambda}+\epsilon\phi)(y))}{|x-y|^{N +ps}}\mathrm{d}x\mathrm{d}y\\
&\displaystyle-\int_{\Omega}\left( \lambda (u_\lambda^+)^{-\gamma}(u_{\lambda}+\epsilon\phi)
+(u_\lambda^+)^{p_s^{*}-1}(u_{\lambda}+\epsilon\phi)\right)
\,{\rm d}x\\
&\displaystyle= ([u_\lambda]_{s,p}^p)^{\sigma-1}\iint_{\mathbb{R}^{2N}}\frac{|u_\lambda(x)-u_\lambda(y)|^{p-2}(u_\lambda(x)-u_\lambda(y))
 ((u_{\lambda}+\epsilon\phi)(x)-(u_{\lambda}+\epsilon\phi)(y))}{|x-y|^{N +ps}}\mathrm{d}x\mathrm{d}y\\
&\displaystyle-\int_{\{(x,y)\in\;\Theta_\epsilon\}}\left( \lambda (u_\lambda^+)^{-\gamma}(u_{\lambda}+\epsilon\phi)
+(u_\lambda^+)^{p_s^{*}-1}(u_{\lambda}+\epsilon\phi)\right)
\,{\rm d}x\\
&\displaystyle=([u_\lambda]_{s,p}^p)^{\sigma-1}\|u_{\lambda}\|^p- \lambda\int_\Omega  (u_\lambda^+)\,{\rm d}x-\lambda\int_\Omega (u_\lambda^+)^{p_s^{*}}
\,{\rm d}x-\int_{\Omega}\left(\lambda(u_\lambda^+)^{-\gamma}\phi+(u_\lambda^+)^{p_s^{*}-1})\phi\right)\,{\rm d}x\\
&\displaystyle+ \epsilon ([u_\lambda]_{s,p}^p)^{\sigma-1}\int_{\mathbb{R}^{2N}}\frac{|u_\lambda(x)-u_\lambda(y)|^{p-2}(u_\lambda(x)-u_\lambda(y))(\phi(x)-\phi(y))}{|x-y|^{N +ps}}dxdy\\
&\displaystyle-([u_\lambda]_{s,p}^p)^{\sigma-1}\int_{\{(x,y)\in\Theta_\epsilon\}}\frac{|u_\lambda(x)-u_\lambda(y)|^{p-2}(u_\lambda(x)-u_\lambda(y))
 ((u_{\lambda}+\epsilon\phi)(x)-(u_{\lambda}+\epsilon\phi)(y))}{|x-y|^{N +ps}}\mathrm{d}x\mathrm{d}y\\
&\displaystyle-\int_{\{(x,y)\in\;\Theta_\epsilon\}}\left(\lambda  (u_{\lambda}^+)^{-\gamma}(u_{\lambda}+\epsilon\phi)
+(u_\lambda^+)^{p_s^{*}-1}(u_{\lambda}+\epsilon\phi)\right)
\,{\rm d}x\\
&\displaystyle= \epsilon ([u_\lambda]_{s,p}^p)^{\sigma-1}\int_{\mathbb{R}^{2N}}\frac{|u_\lambda(x)-u_\lambda(y)|^{p-2}(u_\lambda(x)-u_\lambda(y))(\phi(x)-\phi(y))}{|x-y|^{N +ps}}dxdy\\
&\displaystyle-\epsilon\int_{\Omega}\left(\lambda  (u_{\lambda}^+)^{-\gamma}\phi+(u_\lambda^+)^{p_s^{*}-1}\phi\right)\,{\rm d}x\\
&\displaystyle-([u_\lambda]_{s,p}^p)^{\sigma-1}\int_{\{(x,y)\in\Theta_\epsilon\}}\frac{|u_\lambda(x)-u_\lambda(y)|^{p-2}(u_\lambda(x)-u_\lambda(y))
 ((u_{\lambda}+\epsilon\phi)(x)-(u_{\lambda}+\epsilon\phi)(y))}{|x-y|^{N +ps}}\mathrm{d}x\mathrm{d}y\\
&\displaystyle-\int_{\{(x,y)\in\Theta_\epsilon\}}\left(\lambda  (u_{\lambda}^+)^{-\gamma}(u_{\lambda}+\epsilon\phi)
+(u_\lambda^+)^{p_s^{*}-1}(u_{\lambda}+\epsilon\phi)\right)
\,{\rm d}x\\
&\displaystyle\leq \epsilon ([u_\lambda]_{s,p}^p)^{\sigma-1}\int_{\mathbb{R}^{2N}}\frac{|u_\lambda(x)-u_\lambda(y)|^{p-2}(u_\lambda(x)-u_\lambda(y))(\phi(x)-\phi(y))}{|x-y|^{N +ps}}dxdy\\
&\displaystyle-\epsilon\int_{\Omega}\left(\lambda  (u_{\lambda}^+)^{-\gamma}\phi+(u_\lambda^+)^{p_s^{*}-1}\phi\right)\,{\rm d}x\\
&-([u_\lambda]_{s,p}^p)^{\sigma-1}\int_{\{(x,y)\in\Theta_\epsilon\}}\frac{|u_\lambda(x)-u_\lambda(y)|^{p-2}(u_\lambda(x)-u_\lambda(y))
 ((u_{\lambda}+\epsilon\phi)(x)-(u_{\lambda}+\epsilon\phi)(y))}{|x-y|^{N +ps}}\mathrm{d}x\mathrm{d}y,
\end{align*}
since the measure
$\Omega_\epsilon$ goes to zero as $\epsilon\to  0^{+}.$ We deduce that,
\begin{eqnarray*}
([u_\lambda]_{s,p}^p)^{\sigma-1}\int_{\{(x,y)\in\Theta_\epsilon\}}\frac{|u_\lambda(x)-u_\lambda(y)|^{p-2}(u_\lambda(x)-u_\lambda(y))
 ((u_{\lambda}+\epsilon\phi)(x)-(u_{\lambda}+\epsilon\phi)(y))}{|x-y|^{N +ps}}\mathrm{d}x\mathrm{d}y\to 0.
\end{eqnarray*}
as $\epsilon\to  0^{+}.$ We divide by $\epsilon$ and passing to the limit as
$\epsilon\to  0^{+}$, one has
\begin{eqnarray*}
&\displaystyle ([u_\lambda]_{s,p}^p)^{\sigma-1}\int_{\mathbb{R}^{2N}}\frac{|u_\lambda(x)-u_\lambda(y)|^{p-2}(u_\lambda(x)-u_\lambda(y))(\phi(x)-\phi(y))}{|x-y|^{N +ps}}dxdy\\
&\displaystyle-\int_{\Omega}\left(\lambda  (u_{\lambda}^+)^{-\gamma}\phi+(u_\lambda^+)^{p_s^{*}-1}\phi\right)\,{\rm d}x\geq0.
\end{eqnarray*}
The equality holds if we change $\phi$ by $-\phi$. So  we deduce that  $u_{\lambda}$
is a  nonnegative  solution of
 problem \eqref{eq}.
\end{proof}

\section{A perturbed problem}\label{multiplicity}

Since $J_{\lambda}$ is not Fr\'echet differentiable due to the singular term, we cannot apply the usual variational theory to the functional energy. Therefore, in order to establish
 the existence of a second solution, we  introduce the following perturbed problem
\begin{equation} \label{eqn}
\left\{
  \begin{array}{ll}
([u]_{s,p}^p)^{\sigma-1}(-\Delta)^s_p u
= \frac{\lambda}{(u^{+}+\frac{1}{n})^{\gamma}}+(u^{+})^{ p_s^{*}-1 }\quad \text{in }\Omega,\\
u=0,\;\;\;\;\quad \text{in }\mathbb{R}^{N}\setminus \Omega.
  \end{array}
\right.
\end{equation}
Associated to problem \eqref{eqn}, we consider the functional $J_{n, \lambda}: X_{0}\rightarrow \mathbb{R}$ defined by
\begin{eqnarray*}
J_{n, \lambda}(u):=\frac{1}{p\sigma} \|u\|^{p\sigma} -\frac{\lambda}{1-\gamma}\int_{\Omega}\left((u^{+}+\frac{1}{n})^{1-\gamma}-
(\frac{1}{n})^{1-\gamma}\right) \mathrm{d}x -\frac{1}{p_s^{*}}\int_{\Omega}(u^{+})^{p_s^{*}}  \mathrm{d}x.
\end{eqnarray*}
It is clear that  $J_{n, \lambda}$ is Fr\'echet differentiable, and for all $\varphi\in X_{0}$,  we have
\begin{eqnarray}\label{2.5}
&&  <J'_{n, \lambda}(u),\varphi>=  \|u\|^{p\sigma-2}<u,\varphi> - \lambda \int_{\Omega}\frac{\varphi}{\left(u^{+}+\frac{1}{n}\right)^{1-\gamma}} \mathrm{d}x - \int_{\Omega}(u^{+})^{p_s^{*}-1} \varphi \mathrm{d}x.\nonumber\\
&&
\end{eqnarray}

\begin{lemma}\label{lem1n} Let  $\rho\in(0,1]$, $\lambda_{1}$ and $\alpha$ be the constants given by Lemma \ref{lem1}. Then   for any $\lambda\in(0,\lambda_{1}]$,  one has
$$J_{n,\lambda}(u)\geq\alpha,\;\;\ \mbox{for all} \ u\in X_{0}\mbox{ with } \|u\|\leq\rho.$$
Moreover, $ \mbox{there exists} \   e\in X_{0}$, with $\|e\|>\rho$ and $J_{n,\lambda}(e)<0$.
\end{lemma}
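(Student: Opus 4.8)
The plan is to transfer the geometric estimate of Lemma~\ref{lem1} to the perturbed functional $J_{n,\lambda}$ by a pointwise comparison of the singular terms, and then to produce the point $e$ by letting a fixed direction escape to infinity. The two functionals differ only in their singular parts, so the key observation is that for every $x\in\Omega$, every $n\in\mathbb{N}$, and every $u\in X_0$ one has
$$
0\le\Big(u^+(x)+\tfrac1n\Big)^{1-\gamma}-\Big(\tfrac1n\Big)^{1-\gamma}\le\big(u^+(x)\big)^{1-\gamma},
$$
which follows from the subadditivity of $t\mapsto t^{1-\gamma}$ on $[0,\infty)$ (here $0<1-\gamma<1$). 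Integrating over $\Omega$ and inserting into the definitions of $J_{\lambda}$ and $J_{n,\lambda}$ gives $J_{n,\lambda}(u)\ge J_{\lambda}(u)$ for all $u\in X_0$ and all $n$.

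With this inequality in hand, the first assertion is almost immediate. In the proof of Lemma~\ref{lem1} the bound $J_{\lambda}(u)\ge\|u\|^{1-\gamma}\big(\varphi(\|u\|)-\tfrac{C\lambda}{1-\gamma}\big)$ was established for all $u\in X_0$, where $\varphi(t)=\frac1{p\sigma}t^{p\sigma-1+\gamma}-\frac{c_1}{p_s^{*}}t^{p_s^{*}-1+\gamma}$; the choice of $\rho\in(0,1]$ and $\lambda_1$ made there guarantees $J_\lambda(u)\ge\alpha>0$ whenever $\|u\|=\rho$ and $0<\lambda\le\lambda_1$. Since $1-\gamma<1<p\sigma<p_s^{*}$, the function $t\mapsto\varphi(t)-\tfrac{C\lambda}{1-\gamma}$ is in fact increasing on a right neighbourhood of $0$, so the estimate $J_\lambda(u)\ge\alpha$ holds not only on the sphere $\|u\|=\rho$ but on the whole ball $\|u\|\le\rho$ after possibly shrinking $\rho$ (equivalently, one checks $\|u\|^{1-\gamma}(\varphi(\|u\|)-\tfrac{C\lambda}{1-\gamma})\ge\alpha$ for $0<\|u\|\le\rho$ directly from the monotonicity of $\varphi$ near $0$ and smallness of $\lambda$). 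Combining with $J_{n,\lambda}\ge J_\lambda$ yields $J_{n,\lambda}(u)\ge\alpha$ for all $u\in X_0$ with $\|u\|\le\rho$ and all $\lambda\in(0,\lambda_1]$, uniformly in $n$.

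For the mountain-pass point, fix any $v\in X_0$ with $v^+\not\equiv0$ and examine $t\mapsto J_{n,\lambda}(tv)$ for $t>0$. Using $0\le(tv^++\tfrac1n)^{1-\gamma}-(\tfrac1n)^{1-\gamma}\le t^{1-\gamma}(v^+)^{1-\gamma}$ one gets
$$
J_{n,\lambda}(tv)\le\frac{t^{p\sigma}}{p\sigma}\|v\|^{p\sigma}-\frac{t^{p_s^{*}}}{p_s^{*}}\int_\Omega(v^+)^{p_s^{*}}\,\mathrm{d}x,
$$
and since $p_s^{*}>p\sigma$ (from $1<\sigma<p_s^{*}/p$) and $\int_\Omega(v^+)^{p_s^{*}}>0$, the right-hand side tends to $-\infty$ as $t\to+\infty$. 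Hence there is $t_0>0$, independent of $n$, with $\|t_0 v\|>\rho$ and $J_{n,\lambda}(t_0v)<0$; set $e:=t_0v$. The only mild subtlety is to choose $e$ uniformly in $n$, which is handled by the $n$-independent upper bound above; the dominant (critical) term controls the behaviour at infinity regardless of the perturbation parameter. $\square$
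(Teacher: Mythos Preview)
Your overall strategy coincides with the paper's: use the subadditivity inequality $(u^++\tfrac1n)^{1-\gamma}-(\tfrac1n)^{1-\gamma}\le (u^+)^{1-\gamma}$ to obtain $J_{n,\lambda}(u)\ge J_\lambda(u)$ and then invoke Lemma~\ref{lem1}; for the second assertion, show $J_{n,\lambda}(tv)\to-\infty$ as $t\to\infty$. This is exactly what the paper does, and your observation that the upper bound $J_{n,\lambda}(tv)\le \tfrac{t^{p\sigma}}{p\sigma}\|v\|^{p\sigma}-\tfrac{t^{p_s^*}}{p_s^*}\int_\Omega(v^+)^{p_s^*}$ is independent of $n$ (so $e$ can be taken uniform in $n$) is a useful sharpening.

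There is, however, a genuine gap in your first part. The statement as printed has ``$\|u\|\le\rho$'', but this cannot hold: $J_{n,\lambda}(0)=0<\alpha$, and in fact $J_{n,\lambda}(u)\to 0$ as $\|u\|\to 0$. The condition should read $\|u\|=\rho$, which is all that Lemma~\ref{lem1} delivers and all that the Mountain Pass geometry requires. The paper's own proof simply cites Lemma~\ref{lem1} after establishing $J_{n,\lambda}\ge J_\lambda$, implicitly working on the sphere. Your attempt to extend to the closed ball via the monotonicity of $t\mapsto\varphi(t)-\tfrac{C\lambda}{1-\gamma}$ near $0$ does not salvage the claim: even where $\varphi(\|u\|)-\tfrac{C\lambda}{1-\gamma}>0$, the prefactor $\|u\|^{1-\gamma}$ tends to $0$, so the lower bound $\|u\|^{1-\gamma}\big(\varphi(\|u\|)-\tfrac{C\lambda}{1-\gamma}\big)$ cannot stay above the fixed positive constant $\alpha$ throughout $\overline{B}_\rho$. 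Simply restrict the conclusion to $\|u\|=\rho$ and your argument is complete and matches the paper.
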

\begin{proof}
  Since  $(u^++\frac{1}{n})^{1-\gamma}-(\frac{1}{n})^{1-\gamma}\leq (u^+)^{1-\gamma}$,  we have
$$J_{n,\lambda}(u)\geq J_{\lambda}(u).$$
Therefore, Lemma \ref{lem1} implies that the first part of Lemma \ref{lem1n} has been proved.\\
Now, let $u\in X_{0}$ with $u^+\not\equiv 0$. Then  for any $t>0$, we have
\begin{eqnarray*}
  J_{n,\lambda}(t u) &=&  \frac{t^{p\sigma}}{p} \| u\|^{p\sigma} -\frac{\lambda t^{1-\gamma}}{1-\gamma}\int_{\Omega}\left((u^{+}+\frac{1}{n})^{1-\gamma}-(\frac{1}{n})^{1-\gamma}\right) \mathrm{d}x -\frac{t^{p_s^{*}}}{p_s^{*}}\int_{\Omega}(u^{+})^{p_s^{*}}  \mathrm{d}x.
\end{eqnarray*}
Since $1-\gamma<1\leq p\sigma\leq p_s^{*}$, it follows that
 $J_{n,\lambda}(t u)\rightarrow-\infty \mbox{ as } t\rightarrow \infty.$ Hence, the  the second part of Lemma \ref{lem1n} is proved.
\end{proof}
Now, put
\begin{eqnarray}\label{clambda}
&&C_{\lambda}:=(\frac{1}{p\sigma}-\frac{1}{p_s^{*}})S^{\frac{N\sigma}{ps\sigma-N(\sigma-1)}}
-(\frac{1}{p\sigma}-\frac{1}{p_s^{*}})^{-\frac{1-\gamma}{p\sigma-1+\gamma}}
\left[\lambda
(\frac{1}{1-\gamma}+\frac{1}{p_s^{*}})|\Omega|^{\frac{p_s^{*}-1+\gamma}{p_s^{*}}}S^{-\frac{1-\gamma}{p}}\right]^{\frac{p\sigma}{p\sigma-1+\gamma}}\nonumber\\
&&
\end{eqnarray}
We show the following  useful result.
\begin{lemma} \label{lem4.2} The functional  $J_{n,\lambda}$ satisfies the (PS) condition at any level $c\in \mathbb{R}$ such that $c<C_{\lambda}$ for any $\lambda>0.$
\end{lemma}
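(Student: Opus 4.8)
\emph{Strategy.} I would run the concentration–compactness scheme for critical problems, tailored to the Kirchhoff factor $\|u\|^{p(\sigma-1)}$. Fix $\lambda>0$, a level $c<C_\lambda$ and a $(PS)_c$ sequence $\{u_k\}\subset X_0$. First I would show $\{u_k\}$ is bounded: forming $J_{n,\lambda}(u_k)-\tfrac1{p_s^*}\langle J'_{n,\lambda}(u_k),u_k\rangle$ makes the critical terms cancel, leaves $(\tfrac1{p\sigma}-\tfrac1{p_s^*})\|u_k\|^{p\sigma}$ with a positive coefficient (since $p\sigma<p_s^*$), and the surviving (regularized) singular contributions are controlled by $C\|u_k\|^{1-\gamma}$ through H\"older and the fractional Sobolev embedding, exactly as in Lemma~\ref{lem1}; as $1-\gamma<1<p\sigma$, $\|u_k\|$ stays bounded. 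Passing to a subsequence I get $u_k\rightharpoonup u$ in $X_0$, $u_k\to u$ in $L^r(\Omega)$ for $r<p_s^*$ and a.e. in $\Omega$, and $\|u_k\|^p\to A\ge 0$; if $A=0$ then $u_k\to 0$ in $X_0$ and we are done, so I assume $A>0$. Testing $J'_{n,\lambda}(u_k)$ against $-u_k^-$ and using $\langle u_k,-u_k^-\rangle_{X_0}\ge\|u_k^-\|^p$ gives $\|u_k\|^{p\sigma-p}\|u_k^-\|^p\to 0$, hence $\|u_k^-\|\to 0$; since swapping $u_k$ for $u_k^+$ perturbs $J_{n,\lambda}$ and $J'_{n,\lambda}$ only by $o(1)$, I may henceforth assume $u_k\ge 0$, so $u\ge 0$.

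Next I would pass to the limit in the Euler--Lagrange equation. Since $\{u_k\}$ is bounded in $X_0$ and $u_k\to u$ a.e., the family $|u_k(x)-u_k(y)|^{p-2}(u_k(x)-u_k(y))|x-y|^{-(N+ps)(p-1)/p}$ is bounded in $L^{p/(p-1)}(\RR^{2N})$ and converges a.e., hence weakly in $L^{p/(p-1)}(\RR^{2N})$; this yields $\langle u_k,\varphi\rangle_{X_0}\to\langle u,\varphi\rangle_{X_0}$ for all $\varphi\in X_0$, and together with $\|u_k\|^{p\sigma-p}\to A^{\sigma-1}$, dominated convergence for the regularized singular term, and $u_k^{p_s^*-1}\rightharpoonup u^{p_s^*-1}$ in $L^{p_s^*/(p_s^*-1)}(\Omega)$, letting $k\to\infty$ in $J'_{n,\lambda}(u_k)\to 0$ shows that $u$ solves, weakly in $X_0$, $A^{\sigma-1}(-\Delta)^s_p u=\lambda(u+\tfrac1n)^{-\gamma}+u^{p_s^*-1}$. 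Put $v_k:=u_k-u\rightharpoonup 0$. By the Brezis--Lieb Lemma (in the form of \eqref{limlieb}) one has $\|u_k\|^p=\|u\|^p+\|v_k\|^p+o(1)$ and $\int_\Omega u_k^{p_s^*}=\int_\Omega u^{p_s^*}+\int_\Omega|v_k|^{p_s^*}+o(1)$; after a further subsequence, $\|v_k\|^p\to b$ and $\int_\Omega|v_k|^{p_s^*}\to d$, with $A=\|u\|^p+b$. Inserting these splittings into $\langle J'_{n,\lambda}(u_k),u_k\rangle\to 0$ and subtracting the limiting equation tested against $u$, the singular term and $\int_\Omega u^{p_s^*}$ cancel, leaving the key identity $d=A^{\sigma-1}b$.

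This produces a dichotomy. If $d=0$ then, as $A>0$, also $b=0$, so $\|v_k\|\to 0$, i.e. $u_k\to u$ strongly in $X_0$, and the $(PS)_c$ condition holds. If $d>0$, the fractional Sobolev inequality $S\|v_k\|_{p_s^*}^p\le\|v_k\|^p$ gives $S\,d^{p/p_s^*}\le b$; combining this with $d=A^{\sigma-1}b$ and $b\le A$, and using the exponent identity $\tfrac{p_s^*}{p_s^*-p}=\tfrac{N}{ps}$, a short manipulation forces $A\ge S^{N/(ps\sigma-N(\sigma-1))}$ and consequently $d\ge S^{N\sigma/(ps\sigma-N(\sigma-1))}$.

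Finally I would estimate the level. Writing $\mathcal S:=\int_\Omega\big((u+\tfrac1n)^{1-\gamma}-(\tfrac1n)^{1-\gamma}\big)\,dx\ge 0$ and $\mathcal T:=\int_\Omega u\,(u+\tfrac1n)^{-\gamma}\,dx\ge 0$, the relation $\int_\Omega u^{p_s^*}+d=A^\sigma-\lambda\mathcal T$ (from $\langle J'_{n,\lambda}(u_k),u_k\rangle\to 0$) lets me reduce $c=\lim J_{n,\lambda}(u_k)$ to $c=(\tfrac1{p\sigma}-\tfrac1{p_s^*})A^\sigma+\tfrac{\lambda}{p_s^*}\mathcal T-\tfrac{\lambda}{1-\gamma}\mathcal S$. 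Then, using $A^\sigma=A^{\sigma-1}\|u\|^p+d\ge\|u\|^{p\sigma}+S^{N\sigma/(ps\sigma-N(\sigma-1))}$, discarding $\tfrac{\lambda}{p_s^*}\mathcal T\ge 0$, bounding $\mathcal S\le\int_\Omega u^{1-\gamma}\le|\Omega|^{(p_s^*-1+\gamma)/p_s^*}S^{-(1-\gamma)/p}\|u\|^{1-\gamma}$, and absorbing the singular term into $(\tfrac1{p\sigma}-\tfrac1{p_s^*})\|u\|^{p\sigma}$ by Young's inequality with the conjugate exponents $\tfrac{p\sigma}{1-\gamma}$ and $\tfrac{p\sigma}{p\sigma-1+\gamma}$, I arrive at $c\ge C_\lambda$, contradicting $c<C_\lambda$. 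Hence $d=0$ and the lemma follows. I expect this last paragraph to be the main obstacle: recovering the exact constant $C_\lambda$ demands careful bookkeeping between the Brezis--Lieb identities, the Sobolev-constant exponent arithmetic (in $p_s^*$, $N$, $ps$, $\sigma$), and the Young inequality; the one ingredient above that is more than routine is the weak $L^{p/(p-1)}$-convergence of the nonlinear nonlocal term needed to identify the limiting equation.
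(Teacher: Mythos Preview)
Your argument is correct and follows essentially the paper's route: boundedness via $J_{n,\lambda}(u_k)-\tfrac{1}{p_s^*}\langle J'_{n,\lambda}(u_k),u_k\rangle$, reduction to nonnegative sequences, Brezis--Lieb splitting yielding $A^{\sigma-1}b=d$, the Sobolev lower bound $A\ge S^{N/(ps\sigma-N(\sigma-1))}$, and the final level estimate via Young's inequality producing $c\ge C_\lambda$. The only cosmetic difference is that the paper obtains the key identity by testing $J'_{n,\lambda}(u_k)$ directly against $u_k-u$ rather than first identifying the limiting equation for $u$ and subtracting; both routes rest on the same weak $L^{p/(p-1)}$-convergence of the nonlinear nonlocal term that you correctly single out.
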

\begin{proof}
Let 
$\{u_{k}\}\subset X_{0}$ be a (PS) minimizing  sequence for the functional  $J_{n,\lambda}$ at level $c\in \mathbb{R}$, that is
\begin{equation}\label{ps}
  J_{n,\lambda}(u_{k})\rightarrow c\;\;\;\mbox{ and } \;\; J'_{n,\lambda}(u_{k})\rightarrow 0\;\;\mbox{ as }\;\;k\rightarrow\infty.
\end{equation}
Then  by  the Sobolev embedding and the H\"older inequality, $ \mbox{there exist} \
\epsilon>0$ and $C>0$ satisfying
\begin{eqnarray*}
  c+\epsilon \|u_k\|+\circ(1)  &\geq& J_{n,\lambda}- \frac{1}{p_s^{*}}\prec J'_{n,\lambda}(u_{k}),u_{k} \succ \\
    &=& (\frac{1}{p\sigma}-\frac{1}{p_s^{*}})\|u_{k}\|^{p\sigma}   -\frac{\lambda}{1-\gamma}\int_{\Omega}\left((u_{k}^{+}+\frac{1}{n})^{1-\gamma}-
(\frac{1}{n})^{1-\gamma}\right) \mathrm{d}x\\
 &&+\frac{\lambda}{p_s^{*}}\int_{\Omega}(u_{k}^{+}+\frac{1}{n})^{-\gamma} u_{k}\mathrm{d}x\\
    &\geq&  (\frac{1}{p\sigma}-\frac{1}{p_s^{*}})\|u_{k}\|^{p\sigma}   -\lambda(\frac{1}{1-\gamma}+\frac{1}{p_s^{*}})\int_{\Omega}|u_{k}|^{1-\gamma}\mathrm{d}x \\
    &\geq&   (\frac{1}{p\sigma}-\frac{1}{p_s^{*}})\|u_{k}\|^{p\sigma}   -\lambda C (\frac{1}{1-\gamma}+\frac{1}{p_s^{*}})|\Omega|^{\frac{p_s^{*}+1-\gamma}{p_s^{*}}}\|u_{k}\|^{1-\gamma}.
\end{eqnarray*}
Since $1-\gamma<1<p\sigma<p_s^{*}$, it follows that
 $\{u_{k}\}$ is bounded. Moreover, $\{u^-_{k}\}$ is bounded in $X_0$. So from \eqref{ps}, we deduce that
$$\displaystyle\lim_{k\to\infty}\prec J'_{n, \lambda}(u_k),u_k\succ = \displaystyle\lim_{k\to\infty}\|u_k\|^{p(\sigma-1)}\prec u_k, -u^-_k\succ.$$

On the other hand, by an elementary inequality $$(a-b)(a^--b^-)\leq -(a^--b^-)^2$$
 we have
\begin{align}\label{eq-positive}
0&\leq\iint_{\mathbb{R}^{2N}} \frac{|u(x)-u(y)|^{p-2}(u(x)-u(y))
 (u^-(x)-u^-(y))}{|x-y|^{N +ps}}\mathrm{d}x\mathrm{d}y\nonumber \\
 & \leq -
\iint_{\mathbb{R}^{2N}} \frac{|u(x)-u(y)|^{p-2}
 (u^-(x)-u^-(y))^2}{|x-y|^{N +ps}}\mathrm{d}x\mathrm{d}y.
\end{align}
From \eqref{eq-positive}, we have $\|u^-_k\|\to 0$ as $k$ tends to infinity. Hence, for $k$ large enough, we have
$$J_{n,\lambda}(u_k)=J_{n,\lambda}(u^+_k)+\circ(1)\;\;\;\mbox{ and }\;\;J'_{n,\lambda}(u_k)=J'_{n,\lambda}(u^+_k)+\circ(1),$$
i.e., we can assume that $\{u_k\}$ is a sequence of nonnegative functions.\\
Now, since $\{u_k\}$ is bounded, up to a subsequence and using \cite{bario,serv}, there exist  $\{u_k\}\subset X_0$, $u$ in $X_0$, and nonnegative numbers $l, \mu$   such that
\begin{equation}\label{conv}
  \left\{
    \begin{array}{ll}
u_k \rightharpoonup u \;\;\mbox{weakly in } X_0, \\
 u_k \rightharpoonup u \;\;\mbox{weakly in } L^{p_s^{*}}(\Omega), \\
  u_k \rightarrow u \;\;\mbox{strongly in } L^{q}(\Omega)\;\;\mbox{ for }\;q\in[1, p_s^{*}),  \\
  u_k \rightarrow u \;\;\mbox{a.e. in  } \Omega,
    \end{array}
  \right.
\end{equation}
and
\begin{equation}\label{conv1}
  \left\{
    \begin{array}{ll}
\|u_k\| \rightarrow \mu, \\
 \|u_k-u\|_{p_s^{*}} \rightarrow l.
    \end{array}
  \right.
\end{equation}
Moreover, for a fixed $q\in[1, p_s^{*})$, there is $h\in L^{q}(\Omega)$ such that $$u\leq h\;\;\;\;\;\;\mbox{a.e. in  }\;\; \Omega.$$
It is easy to see that if $\mu=0$, then  $u_k\to 0$ in $X_0$. So let us assume that $\mu>0$. It follows from the above assertion that
 $$\left|\frac{u_k-u}{(u_k+\frac{1}{n})^{\gamma}}\right|\leq n^{\gamma}(h+|u|).$$
Therefore, the dominated convergence theorem implies that
\begin{equation}\label{4.6}
 \displaystyle\lim_{k\to \infty}\int_{\Omega}\frac{u_k-u}{(u_k+\frac{1}{n})^{\gamma}}dx=0.
\end{equation}
Hence, the Bresis-Lieb Lemma \cite{lieb} yields
\begin{equation}\label{4.7}
  \|u_k\|^{p}=\|u_k-u\|^{p}+\|u\|^{p}+\circ (1) \mbox{ and } \|u_k\|_{p_s^{*}}^{p_s^{*}}=\|u_k-u\|_{p_s^{*}}^{p_s^{*}}+\|u\|_{p_s^{*}}^{p_s^{*}}+\circ (1).
\end{equation}
Now, using \eqref{4.6} and \eqref{4.7}, we can deduce that:
\begin{eqnarray*}
  \circ(1) &=& \prec J'_{n,\lambda}(u_k),u_k-u\succ \\
    &=&  \|u_k\|^{p(\sigma-1)}\prec u_k,u_k-u\succ -\lambda \int_{\Omega}\frac{u_k-u}{(u_k+\frac{1}{n})^{\gamma}}dx-\int_{\Omega}u_k^{p_s^{*}-1}(u_k-u)dx\\
    &=& \mu^{p(\sigma-1)}(\|u_k\|^{p}-\|u\|^{p})-\|u_k\|_{p_s^{*}}^{p_s^{*}}+\|u\|_{p_s^{*}}^{p_s^{*}}+ \circ(1) \\
    &=&   \mu^{p(\sigma-1)}\|u_k-u\|^{p}-\|u_k-u\|_{p_s^{*}}^{p_s^{*}}+ \circ(1).
\end{eqnarray*}
Therefore,
\begin{equation}\label{4.8}
\mu^{p(\sigma-1)}\displaystyle\lim_{k\rightarrow \infty}\|u_k-u\|^{p}=\displaystyle\lim_{k\rightarrow \infty}\|u_k-u\|_{p_s^{*}}^{p_s^{*}}=l.
\end{equation}
Since $\mu>0$, if $l=0$, we obtain that $u_k\rightarrow u$ in $X_0$ and the proof is complete.

 Now, let us prove that $l=0$. Proceeding  by contradiction, suppose that $l>0$. Then  from  \eqref{4.8} and the Sobolev embedding, we get
\begin{equation}\label{4.9}
  S\mu^{p(\sigma-1)}l^{p}\leq l^{p_s^{*}},
\end{equation}
that is
\begin{equation}\label{4.99}
  l^{p_s^{*}-p}\geq S \mu^{p(\sigma-1)}.
\end{equation}
On the other hand, by combining \eqref{4.7} and \eqref{4.8}, we obtain
$$
    \mu^{p(\sigma-1)}(\mu^p-\|u\|^{p})=l^{p_s^{*}}
$$
that is,
$$
   l= \mu^{\frac{p(\sigma-1)}{p_s^{*}}}(\mu^p-\|u\|^{p})^{\frac{N-ps}{Np}}.
$$
So using \eqref{4.99}, we get
$$
   l^{p_s^{*}-p}= \mu^{\frac{p(p_s^{*}-p)(\sigma-1)}{p_s^{*}}}(\mu^p-\|u\|^{p})^{\frac{(N-ps)(p_s^{*}-p)}{Np}}\geq S \mu^{p(\sigma-1)}l^{p}.
$$
We deduce that
$$
   \mu^{\frac{p^{2}s}{N}}\geq (\mu^p-\|u\|^{p})^{\frac{(N-ps)(p_s^{*}-p)}{Np}}\geq S \left(\mu^{p(\sigma-1)}\right)^{\frac{N-ps}{N}}.
$$
Since  $1<\sigma<\frac{p_s^{*}}{p}$, it follows that $ps\sigma-N(\sigma-1)>0$. So
\begin{equation}\label{4.10}
  \mu^{p}\geq S^{\frac{N}{ps\sigma-N(\sigma-1)}}.
\end{equation}
Now, the fact that $(u^++\frac{1}{n})^{1-\gamma}-(\frac{1}{n})^{1-\gamma}\leq (u^+)^{1-\gamma}$ implies that for all integers $k$ and $n$
we have
\begin{eqnarray*}
J_{n,\lambda}(u_k)-\frac{1}{p_s^{*}}\prec J'_{n,\lambda}(u_k),u_k\succ \geq (\frac{1}{p\sigma}-\frac{1}{p_s^{*}})\|u_k\|^{p\sigma}-\lambda
(\frac{1}{1-\gamma}+\frac{1}{p_s^{*}})\int_{\Omega}u_k^{1-\gamma}\;dx.
\end{eqnarray*}
So from  \eqref{4.7}, \eqref{4.10}, the H\"older inequality and the
Young inequality, if $k$ tends to infinity, we get
\begin{eqnarray*}
  c &\geq&  (\frac{1}{p\sigma}-\frac{1}{p_s^{*}})(\mu^{p\sigma}+\|u\|^{p\sigma}) -\lambda
(\frac{1}{1-\gamma}+\frac{1}{p_s^{*}})|\Omega|^{\frac{p_s^{*}-1+\gamma}{p_s^{*}}}S^{-\frac{1-\gamma}{p}}\|u\|^{1-\gamma} \\
    &\geq&  (\frac{1}{p\sigma}-\frac{1}{p_s^{*}})(\mu^{p\sigma}+\|u\|^{p\sigma})-(\frac{1}{p\sigma}-\frac{1}{p_s^{*}}) \|u\|^{p\sigma}\\
& & -(\frac{1}{p\sigma}-\frac{1}{p_s^{*}})^{-\frac{1-\gamma}{p\sigma-1+\gamma}}
\left[\lambda
(\frac{1}{1-\gamma}+\frac{1}{p_s^{*}})|\Omega|^{\frac{p_s^{*}-1+\gamma}{p_s^{*}}}S^{-\frac{1-\gamma}{p}}\right]^{\frac{p\sigma}{p\sigma-1+\gamma}}\\
    &\geq&   (\frac{1}{p\sigma}-\frac{1}{p_s^{*}})S^{\frac{N\sigma}{ps\sigma-N(\sigma-1)}}-(\frac{1}{p\sigma}-\frac{1}{p_s^{*}})^{-\frac{1-\gamma}{p\sigma-1+\gamma}}
\left[\lambda
(\frac{1}{1-\gamma}+\frac{1}{p_s^{*}})|\Omega|^{\frac{p_s^{*}-1+\gamma}{p_s^{*}}}S^{-\frac{1-\gamma}{p}}\right]^{\frac{p\sigma}{p\sigma-1+\gamma}}\\
&=&C_{\lambda},
\end{eqnarray*}
which is a contradiction.
\end{proof}

\section{Existence of an upper bound}
Under some   suitable condition, we shall prove that $J_{n,\lambda}$ is bounded   from above. To this end, we can
assume without loss of generality, that $0\in \Omega$ and we fix $r>0$ such that $B_{4r}\subset \Omega$ where $B_{4r}:=\{x\in\mathbb{R}^{N}: |x|<4r\}$. Let $\varepsilon>0$ and $\psi_{\epsilon}$ be the function defined by
\begin{equation}\label{psi}
\psi_{\epsilon}:=\frac{\phi U_{\epsilon}}{\|\phi U_{\epsilon}\|_{p_s^{*}}^{p}},
\end{equation}
where  $U_{\epsilon}$ is the family of functions (for more details see \cite{PeSqYa}) and $\phi\in C^{\infty}(\mathbb{R}^{N}, [0,1])$ is satisfying
$$\phi=\left\{
         \begin{array}{ll}
           1\;\;\mbox{in}\;\;B_{r}, \\
           0\;\;\mbox{in}\;\;\mathbb{R}^{N}\backslash B_{2r}
         \end{array}
       \right.$$
\begin{lemma}\label{pals} $\ \mbox{There exist} \   \lambda_{2}>0$ and $\psi\in E$ satisfying
$$\displaystyle\sup_{t>0}J_{n,\lambda}(t\psi)<C_\lambda,$$
for all $\lambda\in(0,\lambda_1).$
\end{lemma}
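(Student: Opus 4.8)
I would prove Lemma \ref{pals} by testing along the ray through the truncated fractional Sobolev extremal, adapting the Brezis--Nirenberg computation to the Kirchhoff exponent $p\sigma$ and to the singular term. Concretely, I take $\psi=\psi_\epsilon$ as in \eqref{psi} with $\epsilon>0$ small (to be fixed in terms of $\lambda$) and reduce $\sup_{t>0}J_{n,\lambda}(t\psi_\epsilon)$ to a one-variable problem. Since $\psi_\epsilon\ge 0$ and the singular term $\frac{\lambda}{1-\gamma}\int_\Omega[(t\psi_\epsilon+\frac1n)^{1-\gamma}-(\frac1n)^{1-\gamma}]\,dx$ is nonnegative, for every $t>0$
\[
J_{n,\lambda}(t\psi_\epsilon)\le \frac{t^{p\sigma}}{p\sigma}\|\psi_\epsilon\|^{p\sigma}-\frac{t^{p_s^*}}{p_s^*}\|\psi_\epsilon\|_{p_s^*}^{p_s^*},
\]
and by Lemma \ref{lem1n} the map $t\mapsto J_{n,\lambda}(t\psi_\epsilon)$ is negative near $0$ and tends to $-\infty$; hence the supremum is attained at some $t_\epsilon>0$. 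Computing the maximum of $\tau\mapsto\frac{\tau^{p\sigma}}{p\sigma}\|\psi_\epsilon\|^{p\sigma}-\frac{\tau^{p_s^*}}{p_s^*}\|\psi_\epsilon\|_{p_s^*}^{p_s^*}$ explicitly (reached at $\tau_\epsilon=(\|\psi_\epsilon\|^{p\sigma}/\|\psi_\epsilon\|_{p_s^*}^{p_s^*})^{1/(p_s^*-p\sigma)}$, which is bounded above and below uniformly in $\epsilon$ and $n$), one checks that $t_\epsilon$ lies in a fixed compact interval $[\delta,\Delta]\subset(0,\infty)$; the uniform lower bound $t_\epsilon\ge\delta$ is what makes the singular term contribute effectively.

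Next I estimate the three pieces separately. The maximum above equals $(\tfrac1{p\sigma}-\tfrac1{p_s^*})\mathcal S_\epsilon^{\,N\sigma/(ps\sigma-N(\sigma-1))}$, where $\mathcal S_\epsilon=\|\psi_\epsilon\|^p/\|\psi_\epsilon\|_{p_s^*}^p$; the asymptotics of the family $U_\epsilon$ recalled in \cite{PeSqYa} give $\mathcal S_\epsilon\le S+C\epsilon^{a}$ for a suitable $a>0$. For the singular part, using $t_\epsilon\psi_\epsilon\ge\delta\psi_\epsilon$, the monotonicity of $b\mapsto(x+b)^{1-\gamma}-b^{1-\gamma}$ in $b$, and the elementary inequality $(x+b)^{1-\gamma}-b^{1-\gamma}\ge c\,x^{1-\gamma}$ on $\{x\ge b\}$, the concentration profile of $U_\epsilon$ from \cite{PeSqYa} yields $\int_\Omega[(t_\epsilon\psi_\epsilon+\frac1n)^{1-\gamma}-(\frac1n)^{1-\gamma}]\,dx\ge c\,\epsilon^{b}$, uniformly in $n$. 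Putting these together,
\[
\sup_{t>0}J_{n,\lambda}(t\psi_\epsilon)\le\Big(\tfrac1{p\sigma}-\tfrac1{p_s^*}\Big)S^{\frac{N\sigma}{ps\sigma-N(\sigma-1)}}+C\epsilon^{a}-\frac{c\lambda}{1-\gamma}\,\epsilon^{b}.
\]

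Since the first term on the right is precisely the leading term of $C_\lambda$ in \eqref{clambda}, it remains to choose $\epsilon=\epsilon(\lambda)$ so that $C\epsilon^{a}<\frac{c\lambda}{2(1-\gamma)}\epsilon^{b}$ and $\frac{c\lambda}{2(1-\gamma)}\epsilon^{b}$ exceeds the quantity subtracted in \eqref{clambda} (which is $O(\lambda^{p\sigma/(p\sigma-1+\gamma)})$). With such a choice one gets $\sup_{t>0}J_{n,\lambda}(t\psi_\epsilon)<C_\lambda$; this is possible for every $\lambda\in(0,\lambda_2)$ with $\lambda_2$ small, and we set $\psi:=\psi_{\epsilon(\lambda)}$, which proves the lemma.

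The main obstacle is exactly this last balancing of error terms: one must have the sharp order $a$ of the defect of the Sobolev quotient for the truncated extremal and the sharp order $b$ of the singular integral along the concentrating family, together with the correct relation between $a$, $b$ and the exponent $\frac{p\sigma}{p\sigma-1+\gamma}$, so that the gain $-\lambda\epsilon^{b}$ outweighs both the loss $+\epsilon^{a}$ and the $\lambda$-correction in $C_\lambda$. This is where the precise expansions of $U_\epsilon$ from \cite{PeSqYa} — and, implicitly, the standing hypotheses on $N,s,p,\sigma,\gamma$ — are used; everything else (the one-variable reduction, the uniform localization of $t_\epsilon$, and the elementary inequalities for $t\mapsto(t+\frac1n)^{1-\gamma}$) is routine.
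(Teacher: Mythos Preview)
Your outline is essentially the paper's own proof: test along $t\psi_\epsilon$, localize the maximizer $t_\epsilon$ away from $0$ (the paper does this via the two zeros $t_0<t_\epsilon<t_1$ of $t\mapsto J_{n,\lambda}(t\psi_\epsilon)$, you via a compactness argument based on $J_{n,\lambda}(t_\epsilon\psi_\epsilon)\ge\alpha$), bound the Kirchhoff--critical part by $(\tfrac1{p\sigma}-\tfrac1{p_s^*})S^{N\sigma/(ps\sigma-N(\sigma-1))}+O(\epsilon^{a})$, bound the singular integral from below by $c\,\epsilon^{b}$, and then tune $\epsilon$ against $\lambda$. The paper simply makes your unspecified exponents explicit --- $a=\tfrac{N-ps}{p-1}$, and $b$ obtained by integrating the concentrating profile over $\{|x|\le\epsilon^{q}\}$ with an auxiliary parameter $q>0$ --- and closes exactly the balancing you flag as the main obstacle by taking $\epsilon=\lambda^{p(p-1)\sigma/((p\sigma-1+\gamma)(N-ps))}$ and choosing $q$ so that the combined exponent $\beta$ is negative; no additional idea beyond what you describe is needed.
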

\begin{proof}
Let $\epsilon>0$ and let $u_{\epsilon}$ and $\psi_{\epsilon}$ be as above. Since $$0<1-\gamma<p\sigma<p_s^{*},$$
  it is easy to see that
$$J_{n,\lambda}(t\psi_{\epsilon})\longrightarrow -\infty\;\;\mbox{ as }\;\;t\rightarrow \infty.$$
Thus, $\ \mbox{there exists} \
t_{\epsilon}>0$ satisfying
$$J_{n,\lambda}(t_{\epsilon}\psi_{\epsilon})=\displaystyle\max_{t\geq0}J_{n,\lambda}(t\psi_{\epsilon}).$$
From Lemma \ref{lem1},  we get $J_{n,\lambda}\geq\alpha>0$. So since the functional $J_{n,\lambda}$ is continuous, we deduce the existence of  two values $t_{0}, t^{\ast} > 0$ satisfying
$$t_{0}<t_\epsilon< t_{1},\;\;\;\mbox{and }\;\;\;J_{n,\lambda}(t_{0}\psi_{\epsilon})=J_{n,\lambda}(t_{1}\psi_{\epsilon})=0.$$
On the other hand, since $\|u_{\epsilon}\|_{p_s^{*}}$ is independent from $\epsilon,$ it follows  from \cite{mosc} that
$$\|\psi_{\epsilon}\|^{p}\leq\frac{\iint_{\mathbb{R}^{2N}}\frac{|u(x)-u(y)|^p}{|x-y|^{N+ps}}\,dx\,dy}
{\|\phi{u_{\epsilon}\|^p_{p_s^{*}}}}=S+O(\epsilon^{\frac{N-ps}{p-1}}).$$
In fact, for any $a>0, b\in[0,1], p\geq1,$ $$(a+b)^p\leq a^p+p(a+1)^{p-1}b.$$
We obtain  for $\epsilon$ small enough,
$$ \|\psi_{\epsilon}\|^{p\sigma}\leq (S+O(\epsilon^{\frac{N-ps}{p-1}}))^{\sigma}\leq S^\sigma +O(\epsilon^{\frac{N-ps}{p-1}}).$$
Hence, for any $\epsilon>0$ sufficiently small, and using the fact that $t_{0}<t_\epsilon< t_{1}$ and $\|\psi_{\epsilon}\|^{p^{\ast}\sigma}=1,$ we obtain
\begin{eqnarray}\label{4.14}
J_{n,\lambda}(t_{\epsilon} \psi_{\epsilon})&\leq& \left( \frac{t_{\epsilon}^{p\sigma}}{\sigma p}S^{\sigma}-\frac{t_{\epsilon}^{p_s^{*}}}{p_s^{*}}\right) -\frac{\lambda }{1-\gamma}\int_{\Omega}\left((t_{0} \psi_{\epsilon}+\frac{1}{n})^{1-\gamma}-(\frac{1}{n})^{1-\gamma}\right) \mathrm{d}x
\nonumber\\
& +&O(\epsilon^{\frac{N-ps}{p-1}}).
\end{eqnarray}
Since
\begin{equation}\label{max}
 \displaystyle\max_{t>0}\left( \frac{t^{p\sigma}}{\sigma p}S^{\sigma}-\frac{t^{p_s^{*}}}{p_s^{*}}\right)=\left( \frac{1}{\sigma p}-\frac{1}{p_s^{*}}\right)S^{\frac{p_s^{*} \sigma}{p_s^{*}-p\sigma} },
\end{equation}
it follows by \eqref{4.14} and \eqref{max} that
\begin{eqnarray}\label{4.15}
J_{n,\lambda}(t_{\epsilon} \psi_{\epsilon}) &\leq& \left( \frac{1}{\sigma p}-\frac{1}{p_s^{*}}\right)S^{\frac{p_s^{*} \sigma}{p_s^{*}-p\sigma} } -\frac{\lambda }{1-\gamma}\int_{\Omega}\left((t_{0} \psi_{\epsilon}+\frac{1}{n})^{1-\gamma}-(\frac{1}{n})^{1-\gamma}\right) \mathrm{d}x\nonumber\\ &+&O(\epsilon^{\frac{N-ps}{p-1}}).
\end{eqnarray}
In addition, for any $a>0, \;b>0$ large enough,
$$(a+b)^{\epsilon}-a^{\epsilon}\geq \epsilon b^{\frac{\epsilon}{p}}a^{\frac{\epsilon(p-1)}{p}}.$$
We can now deduce that for all $q>0$  small enough, we can establish the existence of  $c_1>0$ satisfying
\begin{align*}
  &\int_{\Omega}\left((t_{0} \psi_{\epsilon}+\frac{1}{n})^{1-\gamma}-(\frac{1}{n})^{1-\gamma}\right) \mathrm{d}x \\&\geq
\displaystyle c_1(1-\gamma)\epsilon^{\frac{(N-ps)(1-\gamma)}{p_s^{*}}}\int_{x\in\Omega:|x|\leq \epsilon^{q}}\left(\frac{1}{(\frac{1}{|x|^{p'}+\epsilon^{p'}})^{\frac{N-ps}{p}}}\right)^{\frac{p(1-\gamma)}{p_s^{*}}}dx \\
    &\geq \displaystyle c_1(1-\gamma) \epsilon^{\frac{(N-ps)(1-\gamma)-p(p-1)q(N-ps)(1-\gamma)+p_s^{*}qN}{p_s^{*}}}.
\end{align*}
Combining this with \eqref{4.15}, we get
\begin{eqnarray}\label{4.18}
  J_{n,\lambda}(t_{\epsilon} \psi_{\epsilon}) &\leq& \displaystyle \left( \frac{1}{\sigma p}-\frac{1}{p_s^{*}}\right)S^{\frac{p_s^{*} \sigma}{p_s^{*}-p\sigma} } - \lambda c_1  \epsilon^{\frac{(N-ps)(1-\gamma)-p(p-1)q(N-ps)(1-\gamma)+p_s^{*}qN}{p_s^{*}}} +O(\epsilon^{\frac{N-ps}{p-1}})\nonumber \\
    &\leq& \displaystyle( \frac{1}{\sigma p}-\frac{1}{p_s^{*}})S^{\frac{p_s^{*} \sigma}{p_s^{*}-p\sigma} } - \lambda  c_1 \epsilon^{\frac{(N-ps)(1-\gamma)-p(p-1)q(N-ps)(1-\gamma)+p_s^{*}qN}{p_s^{*}}} +c_2 \epsilon^{\frac{N-ps}{p-1}},\nonumber\\
    & &
\end{eqnarray}
for some positive constant $c_2$.\\
Now, let $\widetilde{\lambda}>0$ be such that $C_{\lambda}>0$ for all $\lambda\in (0,\widetilde{\lambda})$, where $C_{\lambda}$ is given by \eqref{clambda} and let us set
\begin{eqnarray*}
\beta:=1+\frac{p(p-1)\sigma\left((N-ps)(1-\gamma)-p(p-1)q(N-ps)(1-\gamma)+p_s^{*}qN\right)}{p_s^{*}
(p\sigma-1+\gamma)(N-ps)}-\frac{p\sigma}{p\sigma-1+\gamma},
\end{eqnarray*}
$$\theta:=(\frac{1}{p\sigma}-\frac{1}{p_s^{*}})^{-\frac{1-\gamma}{p\sigma-1+\gamma}}
\left[
(\frac{1}{1-\gamma}+\frac{1}{p_s^{*}})|\Omega|^{\frac{p_s^{*}-1+\gamma}{p_s^{*}}}S^{-\frac{1-\gamma}{p}}\right]^{\frac{p\sigma}{p\sigma-1+\gamma}},$$
and $$\lambda_2:=\min\left\{\widetilde{\lambda}, r^{\frac{(p\sigma-1+\gamma)(N-ps)}{pq\sigma}}, \left(\frac{c_2+\theta}
{c_{1}}\right)^{\frac{1}{\beta}}\right\},$$
where  $r>0$ is such that $B_{4r}\subset \Omega$ and $q>0$ is such that $\beta<0$.\\
Now, for $\lambda\in(0,\lambda_1)$, if we choose $$\epsilon:=\lambda^{\frac{p(p-1)\sigma}{(ps-1+\gamma)(N-ps)}}$$ in \eqref{4.18}. Then  using the fact that $c_1\lambda^{\beta}>c_1\lambda_1^{\beta}\geq c_2+\theta$, we obtain
\begin{eqnarray*}
  J_{n,\lambda}(t_{\epsilon} \psi_{\epsilon}) &\leq& ( \frac{1}{\sigma p}-\frac{1}{p_s^{*}})S^{\frac{p_s^{*} \sigma}{p_s^{*}-p\sigma} } - \lambda  c_1 \lambda^{\beta+\frac{p\sigma}{ps-1+\gamma}-1} +c_2 \lambda^{\frac{p\sigma}{ps-1+\gamma}} \\
    &=& ( \frac{1}{\sigma p}-\frac{1}{p_s^{*}})S^{\frac{p_s^{*} \sigma}{p_s^{*}-p\sigma}}+\lambda^{\frac{p\sigma}{p\sigma-1+\gamma}}(c_2-c_1\lambda^{\beta}) \\
    &<& ( \frac{1}{\sigma p}-\frac{1}{p_s^{*}})S^{\frac{p_s^{*} \sigma}{p_s^{*}-p\sigma} }-\theta\lambda^{\frac{p\sigma}{p\sigma-1+\gamma}}=C_{\lambda}.
\end{eqnarray*}
\end{proof}
Set $$\lambda_{0}:=\min(\lambda_{1}, \lambda_{2}).$$
Then we have the following important result.
\begin{lemma}\label{fin} Problem \eqref{eqn} has a nonnegative solution $v_n\in X_0$ satisfying
$$\alpha<J_{n, \lambda}(v_n)<C_\lambda,$$
for all $\lambda\in(0,\lambda_0),$ where $\alpha$ is from Lemma \ref{lem1}.
\end{lemma}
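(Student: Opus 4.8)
The plan is to realise $v_n$ as a mountain pass critical point of the $C^1$-functional $J_{n,\lambda}$, using Lemmas \ref{lem1n}, \ref{lem4.2} and \ref{pals} as the three ingredients of the Mountain Pass Theorem. Fix $\lambda\in(0,\lambda_0)$, so that $\lambda<\lambda_1$ and $\lambda<\lambda_2$. By Lemma \ref{pals} there is $\psi\in X_0$ with $\sup_{t>0}J_{n,\lambda}(t\psi)<C_\lambda$, and since $1-\gamma<1<p\sigma<p_s^{*}$ forces $J_{n,\lambda}(t\psi)\to-\infty$ as $t\to+\infty$, one can fix $T>0$ so large that $e:=T\psi$ satisfies $\|e\|>\rho$ and $J_{n,\lambda}(e)<0$ (this is exactly the end point produced by Lemma \ref{lem1n}). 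Then introduce the minimax level
$$c_n:=\inf_{g\in\Gamma}\ \max_{t\in[0,1]}J_{n,\lambda}(g(t)),\qquad \Gamma:=\{g\in C([0,1],X_0):\ g(0)=0,\ g(1)=e\}.$$

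Next I would sandwich $c_n$ strictly between $\alpha$ and $C_\lambda$. For the upper bound, the linear path $g_0(t):=tT\psi$ lies in $\Gamma$, whence $c_n\le\max_{t\in[0,1]}J_{n,\lambda}(tT\psi)\le\sup_{s>0}J_{n,\lambda}(s\psi)<C_\lambda$. For the lower bound, each $g\in\Gamma$ joins a point of $\overline B_\rho$ to a point outside it, so by continuity of $t\mapsto\|g(t)\|$ it meets the sphere $\{\|u\|=\rho\}$; invoking not merely Lemma \ref{lem1n} but the \emph{uniform} bound on that sphere, $J_{n,\lambda}(u)\ge J_\lambda(u)\ge\frac{C\rho^{1-\gamma}}{1-\gamma}(2\lambda_1-\lambda)>\frac{C\rho^{1-\gamma}}{1-\gamma}\lambda_1=\alpha$ (valid since $\lambda<\lambda_1$), one gets $c_n\ge\frac{C\rho^{1-\gamma}}{1-\gamma}(2\lambda_1-\lambda)>\alpha$. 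Thus $\alpha<c_n<C_\lambda$. Since $J_{n,\lambda}\in C^1(X_0,\mathbb{R})$, $J_{n,\lambda}(0)=0$, the geometric hypotheses hold by Lemma \ref{lem1n}, and $J_{n,\lambda}$ satisfies $(PS)_{c_n}$ by Lemma \ref{lem4.2} (because $c_n<C_\lambda$), the Mountain Pass Theorem yields $v_n\in X_0$ with $J_{n,\lambda}(v_n)=c_n$ and $J'_{n,\lambda}(v_n)=0$; in particular $\alpha<J_{n,\lambda}(v_n)<C_\lambda$ and, by the expression \eqref{2.5} for $J'_{n,\lambda}$, $v_n$ is a weak solution of \eqref{eqn}.

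It then remains to check $v_n\ge0$. Since $J_{n,\lambda}(v_n)=c_n>\alpha>0=J_{n,\lambda}(0)$, we have $v_n\not\equiv0$, hence $\|v_n\|>0$. Testing the identity $J'_{n,\lambda}(v_n)=0$ against the negative part of $v_n$ and observing that on the set where $v_n$ is negative one has $v_n^{+}\equiv0$, so the critical term vanishes there and the singular term collapses to a positive multiple of $\int_\Omega v_n^{-}\,dx$, we are left with an identity equating $\|v_n\|^{p(\sigma-1)}\langle v_n,v_n^{-}\rangle_{X_0}$ to a positive multiple of $\int_\Omega v_n^{-}\,dx$; the left-hand side has one sign by the elementary inequality recorded in \eqref{eq-positive} while the right-hand side has the opposite sign, so both sides vanish, $v_n^{-}\equiv0$, and $v_n\ge0$. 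The substantive points are the sandwich $\alpha<c_n<C_\lambda$ — where one must use the uniform (not merely pointwise) lower bound on the sphere of radius $\rho$ to obtain the strict inequality $c_n>\alpha$, and Lemma \ref{pals} to keep $c_n$ below the compactness threshold $C_\lambda$ — together with applying the $(PS)$ condition exactly at that level; the remaining steps are the routine assembly of the preceding lemmas and the standard truncation test for the sign of $v_n$.
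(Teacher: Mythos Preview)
Your proof is correct and follows essentially the same route as the paper: mountain pass geometry from Lemma~\ref{lem1n}, the upper bound $c_n<C_\lambda$ via the path $t\mapsto t\psi$ from Lemma~\ref{pals}, the $(PS)_{c_n}$ condition from Lemma~\ref{lem4.2}, and the sign of $v_n$ by testing $J'_{n,\lambda}(v_n)=0$ against $v_n^-$ using \eqref{eq-positive}. If anything you are more careful than the paper in justifying the \emph{strict} inequality $c_n>\alpha$ via the uniform bound $\frac{C\rho^{1-\gamma}}{1-\gamma}(2\lambda_1-\lambda)>\alpha$ on the sphere $\|u\|=\rho$; the paper asserts $\alpha<c_{n,\lambda}$ without making this explicit.
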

\begin{proof} Let $\lambda\in(0,\lambda_0)$. By Lemma \ref{lem1},   $J_{n, \lambda}$ satisfies the Mountain Pass geometry. So we can define  the Mountain Pass level
$$c_{n, \lambda}:=\displaystyle\inf_{g\in \Gamma}\displaystyle\max_{t\in[0,1]}J_{n, \lambda}(g(t)),$$
where $$\Gamma:=\left\{g\in C([0,1], E):g(0)=0, J_{n, \lambda}(g(1))<0\right\}.$$
Moreover, $$0<\alpha<c_{n, \lambda}\leq \displaystyle\sup_{t\geq}J_{n, \lambda}(t\psi)<C_{n, \lambda}.$$
Hence, by Lemma \ref{lem4.2}, $J_{n, \lambda}$ satisfies the (PS) condition at the level $c_{n, \lambda}$, i.e., there exists a non-regular point $v_n$ for $J_{n, \lambda}$ at level $c_{n, \lambda}$. Moreover, $J_{n, \lambda}(v_n)=c_{n, \lambda}>\alpha>0.$ We can therefore deduce that $v_n$ is a nontrivial critical point of the functional energy $J_{n, \lambda}$ and also a solution to   problem \eqref{eqn}. If we now replace $\varphi$ by $v_{n}^-$ in \eqref{2.5} and use \eqref{eq-positive}, we get  $\|v_n\|=0$, that is, $v_n$ is nonnegative. This leads to the  positivity of $v_n$ by  the maximum principle \cite{BrFr}.
\end{proof}
\section{Proof of Theorem \ref{thm}}\label{section4}
In order to complete the proof of our main result it now remains to obtain a second positive solution to problem \eqref{eq} as a limit of the some subsequence of $|{v_n|}$. To this end, let $\lambda\in(0,\lambda_0)$ and  $|{v_n|}$ be a family of the  positive function given by Lemma \ref{fin}. By Lemma \ref{fin},   the  H\"older inequality and since $(v_n +\frac{1}{n})^{1-\gamma}-(\frac{1}{n})^{1-\gamma}\leq v_n^{1-\gamma}$, we see that
\begin{eqnarray*}
  C_\lambda &>& J_{n, \lambda}-\frac{1}{p_s^\ast}<J'_{n, \lambda}(v_n),v_n> \\
    &=&  (\frac{1}{p\sigma}-\frac{1}{p_s^{*}})\|v_{n}\|^{p\sigma}   -\frac{\lambda}{1-\gamma}\int_{\Omega}\left((v_{n} +\frac{1}{n})^{1-\gamma}-
(\frac{1}{n})^{1-\gamma}\right) \mathrm{d}x+\frac{\lambda}{p_s^{*}}\int_{\Omega}(v_{n} +\frac{1}{n})^{-\gamma} v_{n}\mathrm{d}x \\
   &\geq&   (\frac{1}{p\sigma}-\frac{1}{p_s^{*}})\|v_{n}\|^{p\sigma}   -\frac{\lambda}{1-\gamma}\int_{\Omega} v_{n}^{1-\gamma}\mathrm{d}x\\
    &\geq&   (\frac{1}{p\sigma}-\frac{1}{p_s^{*}})\|v_{n}\|^{p\sigma}   -\frac{\lambda}{1-\gamma}|\Omega|^{\frac{p_s^{*}-1+\gamma}{p_s^{*}}}S^{-\frac{1-\gamma}{p}}\|v_{n}\|^{1-\gamma}.
\end{eqnarray*}
Since $0<1-\gamma<1<p\sigma$, $v_{n}$ is bounded in $X_0.$ So,  $\ \mbox{there is} \
v_{\lambda}\in X_0$ satisfying
$$\left\{
  \begin{array}{ll}
     v_n\rightharpoonup v_\lambda\; \mbox{   weakly in } X_0, \\
      v_n\rightharpoonup v_\lambda\; \mbox{  weakly in } L^{p_s^{*}}(\Omega),\\
     v_n\rightarrow v_\lambda \;\mbox{  strongly in } L^{r}(\Omega), \mbox{ for any } r\in[1,p_s^{*}) \\
     v_n\rightarrow v_\lambda\;\;\mbox{ a.e. in } \Omega.
  \end{array}
\right.$$
We shall now prove that $v_n\rightarrow v_\lambda\; \mbox{   strongly in } X_0,$ i.e. $\|v_n-v_\lambda\|\rightarrow 0$ as $n\rightarrow\infty$.\\
 First, we observe that if  $\|v_n\|\rightarrow 0$, then $v_n\rightarrow v_\lambda\; \mbox{   strongly in } X_0$, so we assume that $\|v_n\|\rightarrow \eta>0$. Since $$0\leq\frac{v_n}{(v_n+\frac{1}{n})^{\gamma}}\leq v_n^{1-\gamma}\;\;\mbox{a.e. in }\;\Omega,$$
it follows by the Vitali theorem that
$$\displaystyle\lim_{n\rightarrow \infty}\int_{\Omega}\frac{v_n}{(v_n+\frac{1}{n})^{\gamma}}\mathrm{d}x=\int_{\Omega}v_\lambda^{1-\gamma}\mathrm{d}x.$$
Now, replace both $u$ and $\varphi$  by $v_n$ in \eqref{2.5} to get
\begin{equation}\label{5.3}
  \eta^{p\sigma}-\lambda \int_{\Omega}v_\lambda^{1-\gamma}\mathrm{d}x + \|v_n\|_{p_s^{*}}^{p_s^{*}}\rightarrow 0.
\end{equation}
On the other hand, by a  simple calculation in \eqref{eqn} we get
$$\|v_n\|^{p\sigma}(-\Delta)_{p}^{s}v_n\geq \min(1,\frac{\lambda}{p^{\gamma}})\;\;\;\mbox{in }\Omega,$$
since $v_n$ is bounded in $X_0$. Now, by the strong  maximum principle \cite{BrFr}, there exist $\widetilde{\Omega}\subset \Omega$ and $\widetilde{c}>0$ such that
\begin{equation}\label{5.4}
  v_n\geq \widetilde{c}>0,\;\;\mbox{a.e. in }\;\Omega,
\end{equation}
for any integer $n.$ Let $\varphi \in C_0^\infty(\Omega)$ satisfy $\supp(\varphi)=\widetilde{\Omega}\subset \Omega.$ Then by \eqref{5.4},
$$0\leq \left|\frac{\varphi}{(v_n+\frac{1}{n})^{\gamma}}\right|\leq \frac{|\varphi|}{\widetilde{c}},\;\;\mbox{a.e. in }\;\Omega.$$
Then the dominated convergence theorem implies that
$$
  \displaystyle\lim_{n\rightarrow \infty}\int_{\Omega}\frac{\varphi}{(v_n+\frac{1}{n})^{\gamma}}\mathrm{d}x=\int_{\Omega}v_\lambda^{-\gamma}\varphi\mathrm{d}x.
$$
Thus, by replacing $u$ with $v_\lambda$ in \eqref{2.5} and by letting $n$  to infinity, we obtain
\begin{equation}\label{5.6}
 \eta^{p(\sigma-1)}<v_\lambda,\varphi>-\lambda\int_{\Omega}v_\lambda^{-\gamma}\varphi\mathrm{d}x+\int_{\Omega}v_\lambda^{p_s^\ast-1}\varphi\mathrm{d}x=0.
\end{equation}
Now, if we replace $\varphi$ by $v_\lambda$ in \eqref{5.6} and invoke  \eqref{2.5},  we obtain
 $$=\eta^{p(\sigma-1)}(\eta^{p}-\|v_\lambda\|^{p})\displaystyle\lim_{n\rightarrow\infty}\left(\|v_n\|_{p_s^\ast}^{p_s^\ast}-\|v_\lambda\|_{p_s^\ast}^{p_s^\ast}\right).$$
Therefore, by the Brezis-Lieb Lemma \cite{lieb}, we obtain
\begin{equation}\label{5.7}
  \eta^{p(\sigma-1)}\displaystyle\lim_{n\rightarrow \infty}\left(\|v_n-v_\lambda\|^{p}\right)=l^{p_s^\ast}.
\end{equation}
Now, let us prove that $l=0$, by contradiction, i.e. we assume that $l>0$. As in Lemma \ref{lem4.2} we can prove that
$$l^{p_s^{*}-p}\geq S \mu^{p(\sigma-1)}.$$
Therefore, by Lemma \ref{fin} combined with  Young inequality and H\"older inequality, we deduce
\begin{eqnarray*}
  C_{\lambda} &>& J_{n,\lambda}(v_n)-\frac{1}{p_s^\ast}<J'_{n,\lambda}(v_n),v_n> \\
    &\geq&  \left(\frac{1}{p\sigma}-\frac{1}{p_s^\ast}\right)\left(\eta^{p\sigma}+\|v_\lambda\|^{p\sigma}\right)-\lambda \left(\frac{1}{1-\gamma}+\frac{1}{p_s^\ast}\right)|\Omega|^{\frac{p_{s}^{\ast}-1+\gamma}{p_{s}^{\ast}}}S^{-\frac{1-\gamma}{p}}\|v_\lambda\|^{1-\gamma}\\
    &\geq&  C_{\lambda}.
\end{eqnarray*}
Clearly, this is a contradiction,
so    $l=0$ and $v_n \rightarrow v_\lambda\; \mbox{   strongly in } X_0$.
In addition, one can easily see that $v_\lambda$ is a solution of
 problem \eqref{eq}.
 Therefore by Lemma \ref{fin},
  $J_{\lambda}(v_{\lambda})\geq \alpha>0$ so $v_\lambda$ is nontrivial.
  We can now proceed as in the proof of Lemma \ref{fin} and deduce that $v_\lambda$ is a positive solution of problem \eqref{eq}. In conclusion, since $J_{\lambda}(u_{\lambda})<0<J_{\lambda}(v_{\lambda}),$ this  completes the proof.\qed
\section*{Acknowledgements}
The fourth author was supported by the Slovenian Research Agency program P1-0292 and grants N1-0114 and N1-0083.

\end{document}